    \algrenewcommand\algorithmicindent{1em}
\newcommand{\cI}{\mathcal{I}}
\newcommand{\cJ}{\mathcal{J}}
\newcommand{\cP}{\mathcal{P}}
\newcommand{\I}{\mathbb{I}}
\newcommand{\N}{\mathbb{N}}
\newcommand{\bbR}{\mathbb{R}}
\newcommand{\tht}{\vartheta}
\newcommand{\lrb}[1]{\left(#1\right)}
\newcommand{\brb}[1]{\bigl(#1\bigr)}
\newcommand{\bsb}[1]{\bigl[#1\bigr]}
\newcommand{\bcb}[1]{\bigl\{#1\bigr\}}
\newcommand{\labs}[1]{\left\lvert#1\right\rvert}
\DeclareMathOperator*{\argmin}{argmin}
\DeclareMathOperator*{\argmax}{argmax}
\newcommand{\lip}{Lipschitz}
\newcommand{\fracc}[2]{#1/#2}
\newcommand{\s}{\subset}
\newcommand{\m}{\setminus}
\newcommand{\iop}{\infty}
\newcommand{\nhphantom}[1]{\sbox0{#1}\hspace{-\the\wd0}}
\newcommand{\mypapertitle}{A Near-Optimal Algorithm for\\ Univariate Zeroth-Order Budget Convex Optimization}
\newcommand{\fB}{\mathfrak{B}}
\newcommand{\cancL}{\text{$\filledmedsquare${\hspace{-1pt}}$\medsquare${\hspace{-1pt}}$\square${\hspace{-1pt}}$\medsquare$}}
\newcommand{\cancR}{\text{$\medsquare${\hspace{-1pt}}$\medsquare${\hspace{-1pt}}$\medsquare${\hspace{-1pt}}$\filledmedsquare$}}
\newcommand{\cancLL}{\text{$\filledmedsquare${\hspace{-1pt}}$\filledmedsquare${\hspace{-1pt}}$\medsquare${\hspace{-1pt}}$\medsquare$}}
\newcommand{\cancRR}{\text{$\medsquare${\hspace{-1pt}}$\medsquare${\hspace{-1pt}}$\filledmedsquare${\hspace{-1pt}}$\filledmedsquare$}}
\newcommand{\cancLR}{\text{$\filledmedsquare${\hspace{-1pt}}$\medsquare${\hspace{-1pt}}$\medsquare${\hspace{-1pt}}$\filledmedsquare$}}
\newcommand{\cancNone}{\text{$\medsquare${\hspace{-1pt}}$\medsquare${\hspace{-1pt}}$\medsquare${\hspace{-1pt}}$\medsquare$}}
\newcommand{\algName}{Dyadic Search}
\newcommand{\fupd}{\mathrm{update}}
\newcommand{\fdel}{\mathrm{delete}}
\newcommand{\unif}{\textcolor{black!50!gray}{\mathfrak{u}}}
\newcommand{\nunif}{\textcolor{black!50!gray}{\not{\mathfrak{u}}}}
\newcommand{\del}{\mathrm{del}}
\newtheorem{theorem}{Theorem}
\newtheorem{assumption}{Assumption}%
\title{\mypapertitle{}}
\author[1]{Fran\c{c}ois Bachoc}
\author[2,3]{Tommaso Cesari}
\author[2,4]{Roberto Colomboni}
\author[2,4]{Andrea Paudice}
\affil[1]{Institut de Math\'ematiques de Toulouse, Toulouse, France}
\affil[2]{Universit\`a degli Studi di Milano, Milano, Italy}
\affil[3]{Toulouse School of Economics, Toulouse, France}
\affil[4]{Istituto Italiano di Tecnologia, Genova, Italy}
\begin{document}

\maketitle

\begin{abstract}
This paper studies a natural generalization of the problem of minimizing a univariate convex function $f$ by querying its values sequentially. At each time-step $t$, the optimizer can invest a budget $b_t$ in a query point $X_t$ of their choice to obtain a fuzzy evaluation of $f$ at $X_t$ whose accuracy depends on the amount of budget invested in $X_t$ across times. This setting is motivated by the minimization of objectives whose values can only be determined approximately through lengthy or expensive computations. We design an any-time parameter-free algorithm called Dyadic Search, for which we prove near-optimal optimization error guarantees. As a byproduct of our analysis, we show that the classical dependence on the global Lipschitz constant in the error bounds is an artifact of the granularity of the budget. Finally, we illustrate our theoretical findings with numerical simulations.
\end{abstract}

\section{Introduction}\label{s:intro}
In this paper we consider a variant of the following fundamental question: given oracle access to the values of a convex real-valued function $f$, what is the best approximation in value to the infimum of $f$ (over an interval $I$) that is possible to achieve given a certain number of oracle queries?

This problem is known as zeroth order convex optimization and has been studied from at least \cite{rosenbrock1960}. The field has also recently attracted interest in the machine learning and statistical community due to the fact that computing the gradient of a function that depends on a large dataset, e.g. the empirical risk, can be very expensive if not unfeasible; see for example \citet{bubeck2021} and references therein. Another important application arises in simulation-based optimization, where the goal is to optimally tune the parameters of a system, by only observing its output \cite{conn2009, spall2005}.

In the basic deterministic setting at each query $x$ the oracle answers with $f(x)$. This type of oracle feedback can be restrictive since it does not cover the case where the value of $f$ can only be estimated. The stochastic setting, where the oracle answers with a noisy realization of $f(x)$, overcomes this limitation. One limitation of the stochastic setting is that oracle answers are usually assumed to be conditionally independent from the past. This is natural in applications where the objective is defined by an unknown distribution that can only be accessed via sampling, such as those appearing in statistics. However, this setting doesn't allow for an appropriate modelling of applications where the function values at a query point $x$ can be computed with incremental precision. This is the case of functions that are expressed through infinite series, or whose values is computed through lengthy simulations at a precision that is proportional to the running time that the simulation algorithm has spent on that input.

To overcome these limitations, we propose a novel setting we name \emph{zeroth-order budget convex optimization}. In this setting, at each query $x$ the oracle answers with an interval which is guaranteed to contain $f(x)$ and whose length decreases with the \emph{budget} invested on $x$ so far. This generalizes the deterministic and the stochastic settings and additionally allows for modelling the reuse of information required by the applications described above.

We focus on the one dimensional case which enjoys the rich structures of the reals and allows us to derive bounds that feature fine-grained dependencies on the problem parameters. In particular, we give the following contributions:

\begin{itemize}
\item We design a novel zeroth-order optimization setting that allows to model situations where the objective values can be computed with incremental precision; to the best of our knowledge, this is the first setting that explicitly model the reuse of the information on the issued queries.
\item We design a near-optimal algorithm, Dyadic Search, that works under a minimal convexity assumption on $f$. The algorithm is fully practicable as it works \emph{anytime} and is \emph{parameter free}.
\item We prove a sharp \emph{anytime} upper bound on the optimization error of Dyadic Search that is the sum of two terms: the first one decreases polynomially with the budged, and does not depend on \emph{any} \lip{} constant; the second one depends only on a \emph{local} \lip{} constant and decreases exponentially with the budget (in particular, this yields that the dependence on the \lip{} constant is asymptotically negligible).
\item We prove a matching (up to constants) lower bound. The lower bound holds in the more restrictive finite-horizon setting and then certifies the optimality of Dyadic Search even under stronger assumptions.
\item We show that the dependence on a global or local \lip{} constant of $f$, that (to the best of our knowledge) classically appears in all results ever stated, is an artifact of the discrete nature of convex optimization problems, and can be entirely lifted transitioning to a continuous budget optimization setting (this claim will be expressed more precisely in the discussion following \Cref{t:upper}).
\end{itemize}

\paragraph{Related Work.} 
Zeroth-order optimization is a huge field with a vast literature. Aligned with the scope of this paper, we limit our discussion to the one-dimensional case. 

The deterministic case is the simplest setting in zeroth-order optimization, where at each query $x$ the oracle answers with the exact value of the objective $f(x)$. When $f$ is $L$-Lipschitz, any simple bisecting strategy achieves an optimization error of $\mathcal{O}(L|I|\exp{\left(-t\right)})$ after $t$ queries; this performance matches the lower bound in \citealt[Theorem~3.2.8]{nesterov2018} and then is optimal (up to constant factors). 
Our method, recovers the optimal performance of these bisection strategies when the oracle answers with $f(x)$ each query, moreover it does not require the objective to be globally Lipschitz. 

In stochastic zeroth-order optimization, the oracle answers each query with one (or more) random noisy estimates of the objective, thus generalizing the deterministic case. In this setting the state-of-the-art is more complex and the best achievable performance (and then the related algorithms) depends on the underlying noise and oracle model. 
One of the most studied (and possibly natural) setting is when the oracle answers each query $x$ with a single random noisy estimate $\hat{f}(x; \xi) = f(x) + \xi$, where $\xi$ is a conditionally independent (from the past queries) $0$ mean sub-Gaussian random variable. 
In this setting, the best upper bound on the expected optimization error for convex functions, when specialized to the one-dimensional case, is $\mathcal{O}(\nicefrac{\log(t|I|)}{\sqrt{t}})$ after $t$ queries (see \citealt[Theorem~1]{Lattimore2020}); this method requires the knowledge of the time-horizon. An optimization error of $\Omega(\nicefrac{1}{\sqrt{t}})$ is unavoidable, and then the method proposed in \citealt{Lattimore2020} is nearly optimal, even knowing the time-horizon in advance and under the additional assumptions of smoothness and strong-convexity (see \citealt[Theorem~3]{shamir2013}). The dependence on $\nicefrac{1}{\sqrt{t}}$ is a consequence of the sub-Gaussianity of the confidence intervals. We remark that, in our setting, assuming that the confidence intervals provided by the oracle have the same sub-Gaussian shrinking speed, our method obtains the optimal optimization error of $\mathcal{O}(\nicefrac{1}{\sqrt{t}})$ without requiring the time-horizon in advance.

The nearly optimal bound of \citealt{Lattimore2020} is obtained by a quite involved algorithm. A slightly worse bound of $\mathcal{O} \brb{ (\nicefrac{\sqrt{\log t}}{\sqrt{t}}) \log(\nicefrac{t}{\log t} }$ is achieved with high probability in \citealt[Theorem~1]{Agarwal2013} with a much simpler algorithm. Our method, Dyadic Search, shares important similarities with the trisection method analyzed in \citealt{Agarwal2013}: both the methods monitor the confidence intervals separation around the active points to establish which portion to the domain to discard. Our analysis, however, departs substantially from that of \citealt{Agarwal2013}. Beyond obtaining a tighter bound of $\mathcal{O}(\nicefrac{1}{\sqrt{t}})$, we leverage the incremental structure of the budgeted setting to crave out a fine dependence on the local Lipschitz constant in the optimization error bound. This gives a bound that, depending on the problem instance, can be much sharper than the ones featuring the global Lipschitz constant; besides, we can optimize functions that are not even Lipschitz. We are also able to design a recommendation strategy that makes our algorithm any-time as opposed to that in \citealt{Agarwal2013}.

\paragraph{Outline of the paper.} 
The rest of the paper is organized as follows. 
In \Cref{s:setting} we define the zeroth-order budget convex optimization problem and the required notation. 
In \Cref{s:algo,s:upper} we describe Dyadic Search and its analysis, respectively. 
In \Cref{s:lower} we state and prove an information-theoretic lower bound. 
In \Cref{s:experiments} we present numerical simulations that validate our theory. 
Finally, in \Cref{s:conclusions} we draw the conclusions.

\section{Setting}\label{s:setting}

In this section, we introduce the formal setting for our budget convex optimization problem.

Given a bounded interval $I\s \bbR$, our goal is to minimize an unknown \emph{convex} function $f\colon I \to \bbR$ picked by a possibly adversarial and adaptive environment by only requesting fuzzy evaluations of $f$.
At every interaction $t$, the optimizer is given a certain budget $b_t$ that can be invested in a query point $X_t$ of their choosing to reduce the fuzziness of the value of $f(X_t)$, modeled by an interval $J_t \ni f(X_t)$.

The interactions between the optimizer and the environment are described in Optimization~Protocol~\ref{a:protocol}.

{
\makeatletter
\renewcommand{\ALG@name}{Optimization Protocol}
\makeatother

\begin{algorithm}
\caption{\label{a:protocol}}
\textbf{input:} A non-empty bounded interval $I\s\bbR$ (the domain of the unknown objective $f$)
\begin{algorithmic}[1]
\For{$t=1,2,\dots$}
    \State The environment picks and reveals a budget $b_t > 0$
    \State The optimizer selects a query point $X_t \in I$ where to invest the budget $b_t$
    \State The environment picks and reveals an interval $J_t \s \bbR$ such that $f(X_t) \in J_t$
    \State The optimizer recommends a point $R_t \in I$
\EndFor 
\end{algorithmic}
\end{algorithm}
}

We stress that the environment is adaptive. 
Indeed, the intervals $J_t$ that are given as answers to the queries $X_t$ can be chosen by the environment as an arbitrary function of the past history, as long as they represent fuzzy evaluations of the convex function $f$, i.e., $f(X_t) \in J_t$.

Note that optimization would be impossible without further restrictions on the behavior of the environment, since an adversarial convex environment could return $J_t=\mathbb R$ for all $t\in \N$, making it impossible to gather any meaningful information.
We limit the power of the environment by relating the amount of budget invested in a query point $X_t$ with the length of the corresponding fuzzy representation $J_t$ of $f(X_t)$.
The idea is that the more budget is invested, the more accurate approximation of the objective $f$ can be determined, in a quantifiable way.
This is made formal by the following assumption.
\begin{assumption}
\label{ass:budget}
There exist $c \ge 0$ and $\alpha > 0$ such that, for any $t \in \N$, if the optimizer invested the budgets $b_1, \dots, b_t$ in the query points $X_1, \dots, X_t$, then
\[
    \labs{ J_t }
\le
    \frac{c}{\fB_t^\alpha} \;,
\]
where $\labs{J_t}$ denotes the length of $J_t$ and $\fB_t \coloneqq \sum_{s=1}^t b_s\I\{X_s=X_t\}$ is the total budget invested in $X_t$ up to time $t$.
\end{assumption}

The performance after $T$ interactions of an algorithm that received budgets $b_1,\dots,b_T$ is evaluated with the optimization error of the recommendation $R_T$.
More precisely, we want to control the difference  
$
    f(R_T) - \inf_{x\in I}f(x)
$, 
for any choice of the convex function $f$ and the fuzzy evaluations $J_1,\dots,J_T$.

\section{\algName}\label{s:algo}
In this section, we present our \algName{} algorithm for budget convex optimization (\Cref{a:dyadic}).

Before presenting its pseudo-code, we introduce some notation.
For any positive integer $n\in \N$ we denote by $[n]$ the set $\{1,\dots, n\}$ of the first $n$ integers.
Let $\cP \coloneqq \{ \cancL, \cancR, \cancLL, \cancRR, \cancLR \}$.
The blackened parts of the elements of $\cP$ represent which portions of the active interval maintained by \algName{} the algorithm will delete.
Additionally, we will consider the element $\cancNone$ representing the case where no parts of the active interval will be deleted.
Let $\cJ$ be the set of all intervals, and $\cI \s \cJ$ that of all \emph{bounded} intervals.
Furthermore, for any interval $J\in \cJ$, let
\[
    J^- \coloneqq \inf (J)
    \qquad \text{and} \qquad
    J^+ \coloneqq \sup (J) \;.
\]
\algName{} relies on four auxiliary functions: the $\fdel$ function, the uniform partition function $\unif$, the non-uniform partition function $\nunif$, and the $\fupd$ function.
The $\fdel$ function (see \Cref{f:delete-function})
\[
    \fdel \, \colon \, \cJ^3 \to \cP \cup \{ \cancNone \}
\]
is defined, for all $(J_l, J_{c}, J_{r}) \in \cJ^3$, by
\[
    \begin{cases}
    \cancLL & \text{if } J_c^- \ge J_r^+\text{, else}\\
    \cancRR & \text{if } J_c^- \ge J_l^+\text{, else}\\
    \cancLR & \text{if } J_l^- \ge \min ( J_c^+, J_r^+ ) \text{ and } J_r^- \ge \min ( J_l^+, J_c^+)\text{, else}\\
    \cancL & \text{if } J_l^- \ge \min ( J_c^+, J_r^+ )\text{, else}\\
    \cancR & \text{if } J_r^- \ge \min ( J_l^+, J_c^+ )\text{, else}\\
    \cancNone  & .
    \end{cases}
\]
\begin{figure}
    \centering
    \begin{tikzpicture}[scale = 0.35]
    \draw[->] (-1,0) -- (21,0);
    \draw[very thick] 
        (0,-0.3) -- (0,0.3)
        (5,-0.3) -- (5,0.3)
        (10,-0.3) -- (10,0.3)
        (15,-0.3) -- (15,0.3)
        (20,-0.3) -- (20,0.3)
    ;
    \draw 
        (5,0) node[below right] {$l$}
        (10,0) node[below right] {$c$}
        (15,0) node[below left] {$r$}
        (0,2.5) node[left] {$J_l^+$}
        (0,4) node[left] {$J_r^-$}
        (5,2.5) node[right] {$J_l$}
        (10,8) node[left] {$J_c$}
        (15,7) node[left] {$J_r$}
    ;
    \draw[dashed, gray] 
        (0,2.5) -- (5, 2.5)
        (0,4) -- (15, 4)
    ;
    \draw[thick, blue] 
        (20,9) -- (9,0) parabola (0,5.5) node[blue, above left] {$f$};
    \draw[thick, gray]  
        (5,-1) -- (5,2.5)
        (10,-0.6) -- (10,8)
        (15,4) -- (15,7)
        ;
    \begin{scope} 
        \clip (15.1,-1) rectangle (19.9, 9);
        \foreach \x in {-35,...,20}
        {
            \draw[line width=2pt, darkgray] (10, {20 + \x/2}) -- (30, {\x/2});
        }
        \end{scope}
    \end{tikzpicture}
    \caption{A representation of the $\fdel$ function. Since $J_l^+ \le J_r^-$, the points right of $r$ are deleted, i.e., $\fdel(J_l,J_c,J_r) = \cancR$.}
    \label{f:delete-function}
\end{figure}
In words, the intervals $J_l, J_c, J_r$ will represent the fuzzy evaluations of three points $l < c < r$ in the domain of the unknown objective (left, center, and right).
Since we are assuming that the objective is convex (hence unimodal\footnote{By unimodal, we mean that there exists a point $x$ belonging to the closure of the domain of $f$ such that $f$ is nonincreasing before $x$ and nondecreasing after $x$. More precisely, either $f$ is nonincreasing on the domain intersected with $(-\iop,x]$ and nondecreasing on the domain intersected with $(x,\iop)$ or is nonincreasing on the domain intersected with $(-\iop,x)$ and nondecreasing on the domain intersected with $[x,\iop)$.}), note that whenever an upper bound on the value of the objective at a point $x$ is lower than the lower bound at another point $y$ that is left (resp., right) of $x$, then, all points that are left (resp., right) of $y$ ($y$ included) are no better than $x$.
Therefore, the function $\fdel$ returns which part of an interval containing three distinct points $l < c < r$ should be deleted given the fuzzy evaluations $J_l, J_c, J_r$. (E.g., $\cancLL$ represents the deletion of all points of the active interval left of $c$,
$\cancR$ represents the deletion of all points of the active interval right of $r$,
$\cancNone$ is returned when the fuzzy evaluations are not sufficient to delete anything, etc.)

The uniform and non-uniform partition functions are defined, respectively, by
\begin{align*}
    \unif \, \colon \, \cI & \textstyle{ \to \bbR^3 \;, \quad I \mapsto \brb{ \frac34 I^- + \frac14 I^+, \, \frac12 I^- + \frac12 I^+, \, \frac14 I^- + \frac34 I^+ } } \;, \\
    \nunif \, \colon \, \cI & \textstyle{ \to \bbR^3 \;, \quad I \mapsto \brb{ \frac23 I^- + \frac13 I^+, \, \frac12 I^- + \frac12 I^+, \, \frac13 I^- + \frac23 I^+ } } \;.
\end{align*}
In words, when applied to an interval $I$, the uniform partition function $\unif$ returns the three points that are at $\nicefrac14$,  $\nicefrac12$, and $\nicefrac34$ of the interval, while the non-uniform partition function $\nunif$ returns the three points that are at $\nicefrac13$, $\nicefrac12$, and $\nicefrac23$ of the interval (see \Cref{f:unif-nunif}).
\begin{figure}
    \centering
    \begin{tikzpicture}[scale=3]
    \draw (0,0) -- (1,0);
    \foreach \x in {0,1}
    {
        \draw (\x, -0.05) -- (\x, 0.05);
    }
    \foreach \x in {1/4,1/2,3/4}
    {
        \draw[blue] (\x, -0.05) -- (\x, 0.05);
    }
    \draw 
        (0,-0.05) node[below] {$0$}
        (1/4,-0.05) node[below, blue] {$\nicefrac14$}
        (1/2,-0.05) node[below, blue] {$\nicefrac12$}
        (3/4,-0.05) node[below, blue] {$\nicefrac34$}
        (1,-0.05) node[below] {$1$}
    ;
    \draw (1/2,0.3) node {$\unif$};
    \end{tikzpicture}
    \qquad
    \begin{tikzpicture}[scale=3]
    \draw (0,0) -- (1,0);
    \foreach \x in {0,1}
    {
        \draw (\x, -0.05) -- (\x, 0.05);
    }
    \foreach \x in {1/3,1/2,2/3}
    {
        \draw[blue] (\x, -0.05) -- (\x, 0.05);
    }
    \draw 
        (0,-0.05) node[below] {$0$}
        (1/3,-0.05) node[below, blue] {$\nicefrac13$}
        (1/2,-0.05) node[below, blue] {$\nicefrac12$}
        (2/3,-0.05) node[below, blue] {$\nicefrac23$}
        (1,-0.05) node[below] {$1$}
    ;
    \draw (1/2,0.3) node {$\nunif$};
    \end{tikzpicture}
    \caption{The uniform ($\protect\unif$) and non-uniform ($\protect\nunif$) partition functions applied to the interval $I=[0,1]$.}
    \label{f:unif-nunif}
\end{figure}
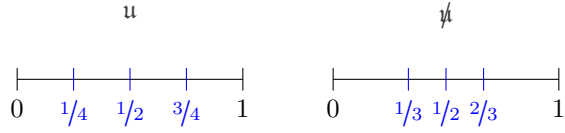

The $\fupd$ function
\[
    \fupd \, \colon \, \cI \times \{ \unif, \nunif \} \times \cP \to \cI \times \{ \unif, \nunif \}
\]
is defined, for all $(I,\tht,\del) \in  \cI \times \{ \unif, \nunif \} \times \cP$, by the following table:
\[
\begin{matrix}
& \unif & \nunif \\
\cancLL & \brb{ \bsb{ \frac12I^-+\frac12I^+, \, I^+ } , \, \unif  } & \brb{ \bsb{ \frac12I^-+\frac12I^+, \, I^+ } , \, \nunif  } \\
\cancRR & \brb{ \bsb{ I^-, \, \frac12I^-+\frac12I^+ } , \, \unif  } & \brb{ \bsb{ I^-, \, \frac12I^-+\frac12I^+ } , \, \nunif  } \\
\cancLR & \brb{ \bsb{ \frac{3I^- + I^+}{4}, \, \frac{I^- + 3I^+}{4} }, \, \unif  } & \brb{ \bsb{ \frac{2I^- + I^+}{3}, \, \frac{I^- + 2I^+ }{3} } , \, \unif  }\\
\cancL & \brb{ \bsb{ \frac34I^-+\frac14I^+, \, I^+ } , \, \nunif  } & \brb{ \bsb{ \frac23I^-+\frac13I^+, \, I^+ } , \, \unif  } \\
\cancR & \brb{ \bsb{ I^-, \, \frac14I^-+\frac34I^+ } , \, \nunif  } & \brb{ \bsb{ I^-, \, \frac13I^-+\frac23I^+ } , \, \unif  } \\
\end{matrix}
\]
In words, when applied to an interval $I$, a type of partition $\tht$, and the subset of $I$ to be deleted modeled by $\del$, the $\fupd$ function returns as the first component the interval $I$ pruned of the subset of $I$ specified by $\tht$ and $\del$, and, as the second component, how the new interval will be partitioned (see \Cref{f:dyadic}).
It can be seen that the types of partitions returned by $\fupd$ are chosen so that our \algName{} algorithms will only query points on a (rescaled) dyadic mesh. (E.g., if $I=[0,1]$, \algName{} will only query points of the form $k/2^{h}$, for $k,h\in \N$.)

For all $t \in \N$, if the sequence of budgets picked by the environment up to time $t$ is $b_1,\dots,b_t$ and the sequence of query points selected by the optimizer is $X_1,\dots,X_t$, for each $x\in \bbR$, we define the quantities
\[
    \fB_{x,t}
\coloneqq
    \sum_{s=1}^t b_s \I \{X_s = x\}
\qquad
\text{and}
\qquad
    J_{x,t}
\coloneqq
    \bigcap_{s\in[t], X_s = x} J_s
\]
with the understanding that $J_{x,t}=\bbR$ whenever $X_s \neq x$ for all $s \in [t]$.
Furthermore, define $\fB_{x,0} = 0$ for all $x\in \bbR$.
In words, $\fB_{x,t}$ is the total budget that has been invested in $x$ by the optimizer up to and including time $t$, while $J_{x,t}$ is the best fuzzy evaluation of the unknown objective at $x$ that is available at the end of time $t$.

The pseudocode of \algName{} is provided in \Cref{a:dyadic}.
\algName{} proceeds in epochs $\tau$ where it maintains an active interval $I_\tau$ and three query points $l_{\tau}, c_{\tau}, r_{\tau} \in I_\tau$.
During each epoch $\tau$, it repeatedly queries a point in $\{ l_{\tau}, c_{\tau}, r_{\tau} \}$ where it invested the least amount of budget (\Cref{s:query}) until the function $\fdel$ has gathered enough information to prune the current active interval (\Cref{state:recommendation-one}).
When this happens, first it updates the active interval and the type of partition using the $\fupd$ function (\Cref{state:update}).
Then, it computes the three query points $l_{\tau+1}, c_{\tau+1}, r_{\tau+1}$ of the next epoch $\tau+1$ (\Cref{s:query-points}). Notably, among $l_{\tau+1}, c_{\tau+1}, r_{\tau+1}$ there will be the point among $l_{\tau}, c_{\tau}, r_{\tau}$ that has the smallest value of $f$ (or one of them, if there are more than one).
Afterwards, the algorithm recommends a point $x \in \{l_{\tau+1}, c_{\tau+1}, r_{\tau+1}\}$ with the best known upper bound $J^+_{x,t}$ on the value of $f(x)$ available at the present time $t$,\footnote{Under \Cref{ass:budget}, this corresponds to recommending a point $x \in \{l_{\tau}, c_{\tau}, r_{\tau}\}$ with the best known upper bound $J^+_{x,t}$ on the value of $f(x)$ that will ``survive'' as a query point of the next epoch. Indeed, for $x \in \{l_{\tau+1}, c_{\tau+1}, r_{\tau+1}\} \m \{l_{\tau}, c_{\tau}, r_{\tau}\}$, we have $J^+_{x,t}=+\iop$, since $x$ has never been evaluated. On the other hand, any $x \in \{l_{\tau}, c_{\tau}, r_{\tau}\}$ has already been evaluated, hence $J^+_{x,t} < \iop$.} and concludes the current epoch (\Cref{state:recommendationone}). 
In all rounds in which function $\fdel$ has not yet gathered enough information to prune the current active interval, the algorithm makes different recommendations depending on whether or not the amount of budget invested in the current epoch is higher than the amount of budget spent in all past epochs combined
(\Cref{state:recommendationonebis,state:recommendationtwo} respectively). 
See \Cref{f:dyadic} for an illustration of how \algName{} works.
\begin{figure}
\centering
\begin{tikzpicture}
\def\ItauMinus{{
	0.0,
	0.0,
	0.0,
	0.0,
	0.0,
	0.0,
	0.0625,
	0.0625,
	0.0625,
	0.078125,
	0.078125,
	0.0859375
}}
\def\ItauPlus{{
	1.0,
	0.75,
	0.5,
	0.375,
	0.25,
	0.1875,
	0.1875,
	0.15625,
	0.125,
	0.125,
	0.109375,
	0.109375
}}
\def\queryl{{
    0.25,
    0.25,
    0.125,
    0.125,
    0.0625,
    0.0625,
    0.09375,
    0.09375,
    0.078125,
    0.09375,
    0.0859375,
    0.09375
}}
\def\queryc{{
    0.5,
    0.375,
    0.25,
    0.1875,
    0.125,
    0.09375,
    0.125,
    0.109375,
    0.09375,
    0.1015625,
    0.09375,
    0.09765625
}}
\def\queryr{{
    0.75,
    0.5,
    0.375,
    0.25,
    0.1875,
    0.125,
    0.15625,
    0.125,
    0.109375,
    0.109375,
    0.1015625,
    0.1015625
}}
\def\recommendation{{
    0.25,
    0.25,
    0.25,
    0.125,
    0.125,
    0.125,
    0.09375,
    0.09375,
    0.09375,
    0.09375,
    0.1015625,
    0.1015625
}}
\def\colorPalette{{
    
}}
\def\scalex{7}
\def\scaley{0.4}

\foreach \x in {0,1,...,11} {
    \pgfmathsetmacro{\myhue}{\x/13} 
    \definecolor{mycolor}{hsb}{\myhue,1,1} 
	\draw[mycolor] 
	    (\ItauMinus[\x]*\scalex,\x*\scaley) -- (\ItauPlus[\x]*\scalex, \x*\scaley);
	\draw[mycolor] 
	    (\queryl[\x]*\scalex,{(\x-0.3)*\scaley}) --
	    (\queryl[\x]*\scalex,{(\x+0.3)*\scaley})
	    (\queryc[\x]*\scalex,{(\x-0.3)*\scaley}) --
	    (\queryc[\x]*\scalex,{(\x+0.3)*\scaley})
	    (\queryr[\x]*\scalex,{(\x-0.3)*\scaley}) --
	    (\queryr[\x]*\scalex,{(\x+0.3)*\scaley});
}
\foreach \x in {0,1,...,11} {
    \pgfmathsetmacro{\myhue}{\x/13} 
    \definecolor{mycolor}{hsb}{\myhue,1,1} 
    \draw[mycolor]
        (\recommendation[\x]*\scalex,\x*\scaley) --
        (\recommendation[\x]*\scalex,{(-0.4)*\scaley});
}
\foreach \x in {0,1,...,11} {
    \pgfmathsetmacro{\myhue}{\x/13} 
    \definecolor{mycolor}{hsb}{\myhue,1,1} 
    \draw[mycolor, fill = white] 
        (\recommendation[\x]*\scalex,\x*\scaley) circle ({0.1*\scalex pt});
}
\draw[thick] 
    (0,0.1*\scalex*\scaley) -- 
    (0.1*\scalex,0) --
    (1*\scalex,0.9*\scalex*\scaley) node[above] {$f(x)$};
\draw[->]  (0,-1*\scaley) -- (1.025*\scalex,-1*\scaley) node[below] {$x$};
\draw[->]  (0,-1*\scaley) -- (0,12*\scaley) node[left] {$\tau$};
\end{tikzpicture}
\caption{A run of \algName{}.
Here, the function is piece-wise linear and its graph is in thick black. The horizontal segments are the active intervals $I_\tau$ of consecutive epochs $\tau$. The short vertical segments are the current query points $l_\tau, c_\tau, r_\tau$ of epoch $\tau$, and the dots (prolonged down vertically) are the recommendations at the end of each epoch, that converge towards $x^\star$.}
\label{f:dyadic}
\end{figure}
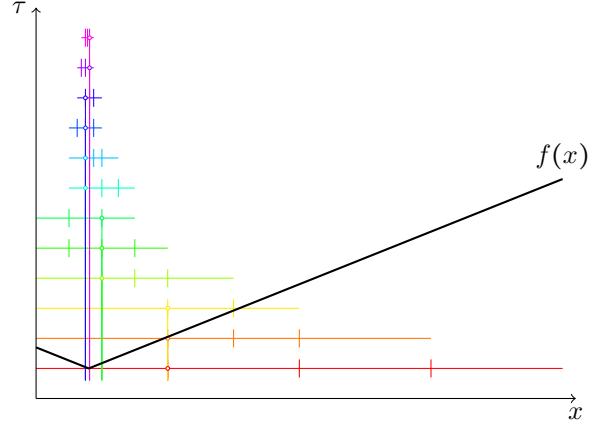

\begin{algorithm}
\caption{\label{a:dyadic}\algName}
\textbf{input:} A non-empty bounded interval $I\s\bbR$ (the domain of the unknown objective)

\textbf{initialization:} $I_1 \coloneqq [I^-,I^+]$, $\tht_1 \coloneqq \unif$, $(l_1,c_1,r_1) \coloneqq \tht_1(I_1)$, $t_0 \coloneqq 0$, $B_0 \coloneqq 0$, $B_{1,0} \coloneqq 0$
\begin{algorithmic}[1]
\For{epochs $\tau=1,2,\dots$}
    \For{$t=t_{\tau-1}+1,t_{\tau-1}+2,\dots$}
        \State Query $X_t \in \argmin_{x \in \{ l_\tau, c_\tau, r_\tau \} } \fB_{x,t-1}$ \label{s:query}
        \State Let $\del_t \coloneqq \fdel( J_{l_\tau,t}, J_{c_\tau,t}, J_{r_\tau,t} )$
        \State Let $B_{\tau,t} \coloneqq B_{\tau,t-1}+b_t$ (and $\tau_t \coloneqq \tau$) \label{state:extra-command}
        \If{$\del_t \neq \cancNone$\label{state:recommendation-one}}
            \State Let $t_\tau \coloneqq t$, $B_\tau \coloneqq B_{\tau,t}$, and $B_{\tau+1,t} \coloneqq 0$
            \State Let $(I_{\tau+1},\tht_{\tau+1}) \coloneqq \fupd (I_\tau,\tht_\tau,\del_t)$ \label{state:update}
            \State Let $(l_{\tau+1},c_{\tau+1},r_{\tau+1}) \coloneqq \tht_{\tau+1}(I_{\tau+1})$ \label{s:query-points}
            \State Recommend 
            $
                R_t
            \in
                 \argmin_{x\in  \{l_{\tau+1}, c_{\tau+1}, r_{\tau+1}\}} J_{x,t}^+ $ \label{state:recommendationone}
            \State \textbf{break}
        \ElsIf{%
        $B_{\tau,t} \ge \sum_{\tau'=0}^{\tau-1} B_{\tau'}$
        \label{state:recommendation-one-bis}}
            \State Recommend
            $
                R_t
            \in
                \argmin_{x\in \{l_\tau, c_\tau, r_\tau\}} J_{x,t}^+$ \label{state:recommendationonebis}
        \Else \label{state:recommendation-two}
            \State Recommend 
            $
                R_t
            \coloneqq
                R_{t_{\tau-1}}
            $
            \label{state:recommendationtwo}
        \EndIf
    \EndFor
    \label{s:repeat-end}
\EndFor 
\end{algorithmic}
\end{algorithm}

We note that the assignment $\tau_t\coloneqq \tau$ in brackets in \Cref{state:extra-command} is not needed to run the algorithm. 
We only added it for notational convenience of the analysis.

As noted above, by definition of the $\fupd$ function, \algName{} only queries points in the rescaled dyadic mesh $\bcb{ I^- + k \cdot 2^{-h} \cdot \labs I : h \in \N, k \in [2^h-1] }$.
Moreover, we stress that \algName{} is any-time (it does not need to know the time horizon $T$ \emph{a priori}), any-budget (it does not need to know the total budget $B \coloneqq \sum_{t=1}^T b_t$) and does not require the unknown objective to be \lip{}.
Nevertheless, we will show in \Cref{t:upper,t:lower} that its performance is guaranteed to be near-optimal even when compared to algorithms with full knowledge of $T$ and $B$, and run on convex \lip{} functions with \emph{known} \lip{} constant.

\section{Upper bound}
\label{s:upper}

In this section, we provide optimization error guarantees for our \algName{} algorithm.

\begin{theorem}
\label{t:upper}
For any bounded interval $I\s\bbR$, if the optimizer is running \algName{} (\Cref{a:dyadic}) with input $I$ in an environment satisfying Assumption \ref{ass:budget} for some $c\ge 0$ and $\alpha>0$, then, there exist $c_1 \le  12 \cdot \big(\fracc{48}{(2^{1/\alpha}-1)}) ^\alpha , c_2 \le 9/8, c_3 \ge \fracc{(\ln 2)}{48} $ such that, for any time $T\in \N$, every sequence of budgets $b_1,\dots,b_T>0$, and every convex function $f\colon I \to \bbR$, the optimization error $f(R_T) - \inf_{x\in I}f(x)$ is upper bounded by
\begin{equation}
\label{e:upper-bound}
    c_1 \cdot \frac{c}{(\sum_{t=1}^T b_t)^{\alpha}}
    +
    c_2 \cdot  L \labs{I} \exp \lrb{-c_3 \cdot  \frac{\sum_{t=1}^T b_t}{\max_{t \in [T]} b_t}} \;,
\end{equation}
where $L$ is the \lip{} constant of $f$ on $[l_{\tau_T}, r_{\tau_T}]$.
\end{theorem}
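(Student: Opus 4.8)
The plan is to fix the horizon $T$, write $\tau^\star:=\tau_T$ for the epoch active at time $T$, and bound $f(R_T)-\inf_I f$ according to which of Lines~\ref{state:recommendationone}, \ref{state:recommendationonebis}, \ref{state:recommendationtwo} produced $R_T$. Two structural invariants, both established by induction on the epoch index $\tau$ directly from the definitions of $\fdel$ and $\fupd$, underpin everything. The first is \emph{safety}: $\inf_{x\in I_\tau}f(x)=\inf_{x\in I}f(x)$ for every $\tau$ --- indeed $\fdel$ returns a nonempty deletion mask only when the confidence intervals it reads, together with the unimodality of $f$, certify that the discarded part of $I_\tau$ contains no point with value below the infimum over what remains; the same case check shows that the lowest-valued of $l_\tau,c_\tau,r_\tau$ always reappears among $l_{\tau+1},c_{\tau+1},r_{\tau+1}$, so $\tau\mapsto\min_{x\in\{l_\tau,c_\tau,r_\tau\}}f(x)$ is nonincreasing. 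The second is \emph{geometric shrinkage}: a line-by-line inspection of the $\fupd$ table gives $|I_{\tau+1}|\le\tfrac34|I_\tau|$ whenever $\tht_\tau=\unif$ (in which case $\tht_{\tau+1}=\nunif$) and $|I_{\tau+1}|\le\tfrac23|I_\tau|$ whenever $\tht_\tau=\nunif$ (in which case $\tht_{\tau+1}=\unif$); since $\tht_1=\unif$, this yields $|I_{\tau+2}|\le\tfrac12|I_\tau|$ and hence $|I_\tau|\le 2\,|I|\,2^{-\tau/2}$, while keeping every query point on a rescaled dyadic mesh.

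Next I would record three quantitative facts. \textbf{(i) Calibration.} Because Line~\ref{s:query} always refills a least-funded active point, a short induction shows that at every time $t$ of epoch $\tau$ the least-funded of $l_\tau,c_\tau,r_\tau$ carries total budget at least $\tfrac13 B_{\tau,t}-\max_{s\le t}b_s$ (the head start inherited by the surviving point from previous epochs only helps); by \Cref{ass:budget} each active point then has a confidence interval of length at most $c\,(\tfrac13 B_{\tau,t}-\max_{s\le t}b_s)^{-\alpha}$. \textbf{(ii) Detection.} If $\del_t=\cancNone$, then translating the five inequalities defining $\cancNone$ through $f(x)\in J_{x,t}$ shows that the oscillation $g_\tau:=\max_{x\in\{l_\tau,c_\tau,r_\tau\}}f(x)-\min_{x\in\{l_\tau,c_\tau,r_\tau\}}f(x)$ is at most twice the length of the longest of $J_{l_\tau,t},J_{c_\tau,t},J_{r_\tau,t}$; consequently an epoch ends as soon as that length falls below $g_\tau/2$, so a completed epoch $\tau$ consumes $B_\tau\le 6\max_s b_s+3\,(2c/g_\tau)^{1/\alpha}$. \textbf{(iii) Gap-to-optimum.} By convexity, $\min_{x\in\{l_\tau,c_\tau,r_\tau\}}f(x)-\inf_I f\le g_\tau$: if the minimizer lies in $[l_\tau,r_\tau]$ this is a one-line estimate using that $c_\tau$ is the midpoint of $[l_\tau,r_\tau]$; if it lies outside, then $f$ is monotone on $[l_\tau,r_\tau]$, the nearer endpoint already attains $\min_x f(x)$, and the one-sided slope of $f$ there is bounded both by $g_\tau/(c_\tau-l_\tau)$ and by the Lipschitz constant $L$ of $f$ on $[l_\tau,r_\tau]$, so multiplying by the $\le\tfrac14|I_\tau|$ distance to the minimizer closes the case. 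Combined with shrinkage this also gives $g_\tau\le L\,|I_\tau|\le 2\,L\,|I|\,2^{-\tau/2}$.

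Putting the branches together: if $B_{\tau^\star,T}\ge\tfrac12\sum_{t}b_t$ --- which covers Line~\ref{state:recommendationonebis} and, after the at most one extra step needed when $\fdel$ has just fired, also Line~\ref{state:recommendationone} --- then (i) bounds every active confidence length by $c\,(\tfrac16\sum_t b_t-\max_s b_s)^{-\alpha}$, so $f(R_T)\le\min_x J^+_{x,T}\le\min_x f(x)+c\,(\tfrac16\sum_t b_t-\max_s b_s)^{-\alpha}$; since $\del_T=\cancNone$ on this branch, (ii) and (iii) give $\min_x f(x)-\inf_I f\le g_{\tau^\star}\le 2c\,(\tfrac16\sum_t b_t-\max_s b_s)^{-\alpha}$ as well, so the whole error falls inside the first term of \eqref{e:upper-bound} (the degenerate regime $\sum_t b_t=\mathcal O(\max_s b_s)$ is handled separately, noting that in \eqref{e:upper-bound} one has $c_2\exp(-c_3\sum_t b_t/\max_s b_s)\ge c_2 e^{-c_3}>1$, so the second term alone already dominates the trivial bound $f(R_T)-\inf_I f=\mathcal O(L|I|)$ coming from (iii)). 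If instead $B_{\tau^\star,T}<\tfrac12\sum_t b_t$ (Line~\ref{state:recommendationtwo}), then $R_T=R_{t_{\tau^\star-1}}$; using that $R_{t_{\tau^\star-1}}$ is the best-known survivor of epoch $\tau^\star-1$ together with the monotonicity of $\tau\mapsto\min_{x\in\{l_\tau,c_\tau,r_\tau\}}f(x)$ one gets $f(R_T)-\inf_I f\le\mu+c\,\fB_{R_T,T}^{-\alpha}$ with $\mu:=\min_{\tau<\tau^\star}g_\tau$; the second summand is absorbed by the first term of \eqref{e:upper-bound} once one checks that the long-lived survivor $R_T$ has accumulated enough budget, and for $\mu$ one splits on its size: when $\mu\ge 2c/(2\max_s b_s)^\alpha$, the bound in (ii) forces $B_\tau\le 12\max_s b_s$ for every $\tau<\tau^\star$, hence $\tfrac12\sum_t b_t<12(\tau^\star-1)\max_s b_s$, hence $\mu\le g_{\tau^\star-1}\le 2L|I|\,2^{-(\tau^\star-1)/2}< 2L|I|\exp\!\bigl(-\tfrac{\ln 2}{48}\tfrac{\sum_t b_t}{\max_s b_s}\bigr)$, i.e.\ the second term of \eqref{e:upper-bound} with $c_3=\tfrac{\ln 2}{48}$; when $\mu< 2c/(2\max_s b_s)^\alpha$, the epoch realizing $\mu$ is so long that $\mu$ is itself of order $c\,(\sum_t b_t)^{-\alpha}$ and falls into the first term. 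Carrying the exact geometric constants through these steps produces the stated $c_1,c_2,c_3$.

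I expect the last paragraph to be the real work. The decay $\exp(-c_3\sum_t b_t/\max_s b_s)$ cannot come from a naive ``number of completed epochs is at least a constant times the budget'' argument, because a flat stretch of $f$ makes a single epoch arbitrarily long; one must simultaneously exploit that such a long epoch has a vanishingly small local oscillation $g_\tau$ --- hence contributes a negligible error by (iii) --- and amortize the budget across epochs while carefully tracking the head-start budget that a surviving query point carries forward, so that in every configuration exactly one of the two terms of \eqref{e:upper-bound} absorbs the error.
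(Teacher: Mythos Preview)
Your structural invariants (safety and shrinkage), the calibration estimate (i), and the three-interval intersection behind (ii) are all correct and appear in the paper's own proof. The genuine gap is exactly where you anticipate it: the case $\mu<2c/(2\max_s b_s)^\alpha$ at the end. You assert that ``the epoch realizing $\mu$ is so long that $\mu$ is itself of order $c(\sum_t b_t)^{-\alpha}$,'' but nothing you have established yields this. Fact (ii) is only an \emph{upper} bound $B_\tau\le 6\max_s b_s+3(2c/g_\tau)^{1/\alpha}$; a small $g_\tau$ permits a long epoch but does not force one. If instead you sum (ii) over $\tau<\tau^\star$ and use only $g_\tau\ge\mu$, you obtain $\tfrac12\sum_t b_t\le 6(\tau^\star{-}1)\max_s b_s+3(\tau^\star{-}1)(2c/\mu)^{1/\alpha}$, which traps $\mu$ only up to an extra factor $(\tau^\star{-}1)^\alpha$. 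To kill that factor you would need $\sum_{\tau}(1/g_\tau)^{1/\alpha}=O\bigl((1/\mu)^{1/\alpha}\bigr)$, i.e.\ a geometric growth of $g_\tau$ backwards in $\tau$; your oscillations $g_\tau$ mix values on both sides of $x^\star$ and you have not shown any such structure for them.

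The paper closes this gap with a construction your outline lacks. It partitions the completed epochs into four classes according to the pair $\brb{\tht_\tau,\ \I\{x^\star\le c_\tau\}}$, fixes the class $A=\{\kappa_1<\cdots<\kappa_n\}$ carrying at least a quarter of the total budget, and in each $\kappa_i$ records the two active query points $x_{\kappa_i},y_{\kappa_i}$ lying on the \emph{same side} of $x^\star$. Within one class $|y_{\kappa_i}-x_{\kappa_i}|$ halves with $i$, and because all these points lie on one side of $x^\star$ the convex difference quotients $\brb{f(y_{\kappa_i})-f(x_{\kappa_i})}/\brb{y_{\kappa_i}-x_{\kappa_i}}$ are monotone in $i$. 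These two facts give the geometric chaining
\[
\sum_{i=1}^n\Brb{\frac{f(y_{\kappa_n})-f(x_{\kappa_n})}{f(y_{\kappa_i})-f(x_{\kappa_i})}}^{1/\alpha}
\;\le\;
\sum_{i=1}^n 2^{-(n-i)/\alpha}
\;\le\;
\frac{1}{1-2^{-1/\alpha}}\,,
\]
which collapses the summed per-epoch budget bounds to a single term at $\kappa_n$ and delivers the constant $c_1$ in the statement. This same-side device is precisely what your $g_\tau$ is missing. A smaller issue: your two-branch split by $B_{\tau^\star,T}\gtrless\tfrac12\sum_t b_t$ does not exhaust Line~\ref{state:recommendationone}, which fires whenever $\del_T\neq\cancNone$ regardless of the budget split, and there the three confidence intervals need not intersect so (ii) is unavailable; in the paper this is in fact the principal case, handled directly by the chaining above, with Line~\ref{state:recommendationtwo} then reducing to it.
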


The full proof of this result can be found in \Cref{s:appe-proof-ub}.
Before presenting a sketch of it here, we make a few remarks.
First, note that the bound is non-trivial even when the function is \emph{not} globally Lipschitz (as is it the case, e.g., for the function $f(x) = -\sqrt{1-x^2}$ defined on the interval $I = [-1,1]$), since it depends on a \emph{local} Lipschitz constant $L$ (which is always finite) that, informally, as the epochs go by, captures better and better how much the function varies around the points that are close to the minimum.
Second, note that (up to the constants $c_1,c_2,c_3$) the bound consists of two terms.

The first term  $\fracc{c}{(\sum_{t=1}^T b_t)^{\alpha}}$ is a consequence of the fuzziness of the evaluations, that is regulated by \Cref{ass:budget}: when $\sum_{t=1}^T b_t \ge 1$,
it decreases when $\alpha$ increases or $c$ decreases.
Moreover, when $c = 0$ and $b_t=1$ for all $t\in[T]$, our problem reduces to deterministic convex optimization (DCO).
In this case, the first term vanishes completely, leaving behind only the known optimal exponentially-decaying rate $L \labs{I} e^{-\Omega(T)}$ for DCO.

The second term $L \labs{I} e^{- \Omega \lrb{ \fracc{\sum_{t=1}^T b_t}{\max_{t \in [T]} b_t} } }$ is a consequence of the discrete nature of our setting.
Notably, if the optimizer could choose to invest infinitesimally small budgets $b_t$ (i.e., if the discrete optimization protocol became a continuous one), the term would vanish completely.
Strikingly, when this is the case, the bound becomes completely \emph{independent} of the \lip{} constant $L$.
To the best of our knowledge, this is the first result in convex optimization that shows that the dependence on $L$ could be entirely lifted if we could transition from a discrete to a continuous setting.
In other words, our bound gives a parameterization of the dependence on the \lip{} constant in terms of how close our setting is to a continuous one.
The high-level reason for this behavior is that, in a discrete setting, the optimizer might be forced to spend a large amount of budget $b_t$ on a point $X_t$ where a significantly smaller investment would have been sufficient to determine whether or not that point was suboptimal.
In this case, if the function is varying significantly, the number of queries could not be sufficient to get closer to a minimizer, and this would yield an optimization error that scales with $L$.
Finally, we note that, naturally, the \lip{} constant $L$ and the domain length $\labs I$ appear as a product.
Indeed, shrinking (resp., dilating) the domain of a function $f\colon I \to \bbR$ corresponds (inversely-proportionally) to an increase (resp., decrease) of the \lip{} constant and vice versa.

\begin{proof}[Proof sketch.]
The proof of \Cref{t:upper} proceeds as follows.
We divide the analysis in the three cases sketched below, depending on how \algName{} selects $R_T$ in \Cref{state:recommendationone,state:recommendationonebis,state:recommendationtwo}.
\begin{enumerate}
    \item \label{i:first-case} $\del_T \neq \cancNone$. 
    In this case, we partition the number of epochs in several classes and focus our attention on the class where we invested the highest fraction of the total budget $\sum_{t=1}^T b_t$.
    Say that this class contains $n$ epochs.
    If $n$ is small, we show that in the last epoch of this class there exist two query points that are near-optimal and that the recommendation $R_T$ of \algName{} has guarantees that are close to those of these two near-optimal points.
    If, on the other hand, $n$ is large, the result follows by the local \lip{}ness of the objective.
    \item $\del_T = \cancNone$ and the majority of the budget was invested in the last epoch.
    In this case, we split again the analysis in two further cases.
    If the maximum budget $\max_{t\in[T]} b_t$ is small, we show that all three query points of the last epoch are near-optimal, therefore so is the recommendation $R_T$.
    If, on the other hand, the maximum budget $\max_{t\in[T]} b_t$ is large, we fall back again to the local \lip{}ness of the objective.
    \item $\del_T = \cancNone$ and the majority of the budget was invested before the last epoch.
    Since in this case the recommendation $R_T$ is the same as the recommendation that ended the previous epoch, the result follows by \Cref{i:first-case}, using half of the total budget. \qedhere
\end{enumerate}
\end{proof}
The full proof of \Cref{t:upper} is deferred to \Cref{s:appe-proof-ub}.
%

\section{Lower bound}
\label{s:lower}

In this section, we show that \algName{} is worst-case optimal, in the sense that there exist instances where the upper bound of \Cref{t:upper} is matched (up to possibly different constants $c_1,c_2,c_3$).
The apparent asymmetry between our upper and lower bounds is due to the fact that, in \Cref{t:lower}:
\begin{itemize}
    \item We gave the optimizer the freedom to select the time horizon $T$ and total budget $B$ ahead of time.
    \item We restricted the result to the class of convex \lip{} functions.
\end{itemize}
Note that both these changes make our results stronger, since \algName{} is able to match the lower bound despite lacking the freedom to select $T, B$ (in fact, being totally oblivious to a possibly adversarial choice of both) and \Cref{t:upper} holds even for non-\lip{} functions.

\begin{theorem}
\label{t:lower}
For any nondegenerate bounded interval $I\s\bbR$, if the environment satisfies Assumption \ref{ass:budget} for some $c\ge 0$ and $\alpha>0$, then, there exist $c_1 \ge  \nicefrac{1}{4} , c_2 \ge \nicefrac{1}{32e}, c_3 \le 1 $ such that, for any time $T\in \N$, every total budget $B>0$, every Lipschitz constant $L>0$, and every algorithm run by the optimizer, there exist a sequence of budgets $b_1,\dots,b_T$ such that $\sum_{t=1}^T b_t = B$ and there exists a $\max\big(\frac{c}{|I| B^\alpha }, L\big)$-Lipschitz convex function $f\colon I \to \bbR$, for which the optimization error $f(R_T) - \inf_{x\in I}f(x)$ is lower bounded by
\begin{equation}
\label{e:lower-bound}
    c_1 \cdot  \frac{c}{(\sum_{t=1}^T b_t)^{\alpha}}
    +
    c_2 \cdot L \labs{I} \exp \lrb{-c_3 \cdot  \frac{\sum_{t=1}^T b_t}{\max_{t \in [T]} b_t}} \;.
\end{equation}
\end{theorem}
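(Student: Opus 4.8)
The plan is to play two different adversaries in two complementary regimes, arranging the case split so that in each regime a single hard function already accounts for \emph{both} summands of \eqref{e:lower-bound}. After a translation, assume $I=[0,\ell]$ with $\ell=|I|>0$, and write $A:=c/B^{\alpha}$ and $\Lambda:=\max\{c/(|I|B^{\alpha}),L\}$ (which is exactly the Lipschitz bound the statement permits). Fix the budget sequence to be uniform, $b_1=\dots=b_T=B/T$, so that $\sum_{t}b_t=B$ and $\sum_{t}b_t/\max_{t}b_t=T$, which turns the target into $c_1A+c_2L\ell\,e^{-c_3T}$. I would set $c_1:=\tfrac14$ and $c_2:=\tfrac1{32e}$, and let $c_3\le 1$ be the decay rate in the classical dyadic lower bound for \emph{deterministic} one-dimensional convex optimization (the bisection/golden-section lower bound underlying \citealt[Theorem~3.2.8]{nesterov2018}): after $T$ exact evaluations of a convex $L$-Lipschitz function on $[0,\ell]$, every algorithm incurs error at least $\kappa\,L\ell\,e^{-c_3T}$ for an absolute constant $\kappa\ge\tfrac1{16e}$. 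Then split according to whether $c_2L\ell e^{-c_3T}\le A/4$ (the \textbf{fuzziness-dominated} case) or $c_2L\ell e^{-c_3T}> A/4$ (the \textbf{Lipschitz-dominated} case).

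In the fuzziness-dominated case the target is at most $c_1A+A/4=A/2$, so it suffices to force error at least $A/2$. I would use the two affine functions $f_{+}(x)=\tfrac{A}{\ell}\,x$ and $f_{-}(x)=\tfrac{A}{\ell}(\ell-x)$ on $[0,\ell]$: both are convex, $\tfrac{A}{\ell}$-Lipschitz (hence $\Lambda$-Lipschitz), and have infimum $0$. To each query $X_t$ the environment answers with the shortest interval $J_t$ containing $f_{+}(X_t)$ and $f_{-}(X_t)$; its length is $\tfrac{A}{\ell}\,|2X_t-\ell|\le A=c/B^{\alpha}\le c/\fB_t^{\alpha}$ since $\fB_t\le B$, so \Cref{ass:budget} holds and both candidate objectives are consistent with the whole transcript. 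Hence $R_T$ is the same whether the objective is $f_{+}$ or $f_{-}$, and $f_{+}(R_T)+f_{-}(R_T)=A$ forces $\max\{f_{+}(R_T),f_{-}(R_T)\}\ge A/2$; the environment commits to that maximiser, yielding $f(R_T)-\inf f\ge A/2\ge c_1A+c_2L\ell e^{-c_3T}$.

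In the Lipschitz-dominated case the target is strictly below $c_1A+c_2L\ell e^{-c_3T}<2c_2L\ell e^{-c_3T}=\tfrac1{16e}\,L\ell e^{-c_3T}$, so it suffices to force error at least $\tfrac1{16e}L\ell e^{-c_3T}$. Here the environment would answer every query with the singleton $J_t=\{f(X_t)\}$, which is legitimate since $|J_t|=0\le c/\fB_t^{\alpha}$, and build $f$ exactly as in the classical resisting-oracle argument for deterministic convex optimization, using that the $T$ budgeted steps realise at most $T$ distinct exact evaluations (repeated queries return the cached value and carry no information), so the classical bound for $T$ queries applies. The resulting $f$ is convex and $L$-Lipschitz, hence $\Lambda$-Lipschitz, and the classical lower bound gives $f(R_T)-\inf f\ge\kappa L\ell e^{-c_3T}\ge\tfrac1{16e}L\ell e^{-c_3T}$, at least the target.

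The step I expect to be most delicate is the Lipschitz-dominated case: it relies on the one-dimensional deterministic lower bound having precisely the right shape — exponential decay in the number of queries with rate $c_3\le 1$ and a front constant $\kappa\ge1/(16e)$ — and on checking that the standard construction (keep an interval of candidate minimizers of length $\Omega(\ell\rho^{T})$ alive while the reported values stay consistent with two convex $L$-Lipschitz extensions whose minima differ by $\Omega(L\ell\rho^{T})$) transfers verbatim once the oracle is forced to return singletons. A secondary bookkeeping point is to verify that, with the declared constants $c_1=\tfrac14$ and $c_2=\tfrac1{32e}$, the dominant summand in each case genuinely absorbs the whole target $c_1A+c_2L\ell e^{-c_3T}$.
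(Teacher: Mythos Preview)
Your proposal is correct and follows essentially the same approach as the paper: uniform budgets $b_t=B/T$, a two-case split according to which summand dominates, two indistinguishable affine functions of slope $\pm c/(|I|B^\alpha)$ in the fuzziness-dominated regime, and a reduction to the classical deterministic lower bound via singleton feedback in the Lipschitz-dominated regime. The only cosmetic difference is that the paper returns the fixed interval $[-c/(2\fB_t^\alpha),c/(2\fB_t^\alpha)]$ at every step, whereas you return the convex hull of $\{f_+(X_t),f_-(X_t)\}$; both choices satisfy \Cref{ass:budget} and make the two candidates indistinguishable, so the arguments are equivalent.
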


\begin{proof}
    Fix a nondegenerate bounded interval $I\s\bbR$.
    Fix also an horizon $T \in \N$, a total budget $B>0$, and a Lipschitz constant $L > 0$.
    For each $t \in [T]$, define $b_t \coloneqq B/T$.
    We divide the proof in two cases, depending on which of the two addends in \eqref{e:lower-bound} is the dominant term.
    
    Assume first $\frac{1}{4} \cdot  \frac{c}{(\sum_{t=1}^T b_t)^{\alpha}} \ge \frac{1}{32e} \cdot L \labs{I} \exp \lrb{- \frac{\sum_{t=1}^T b_t}{\max_{t \in [T]} b_t}}$.
    For all $b > 0$, define $J(b) \coloneqq \bsb{-\fracc{c}{(2b^\alpha)},\fracc{c}{(2b^\alpha)} }$.
    Consider the two alternative objective functions $f_+$ and $f_-$, defined for all $x \in I$, by 
    \[
        f_{\pm}(x) 
    \coloneqq 
        \pm\lrb{1 - \frac{2(x-I^-)}{\labs{I}} } \cdot \frac{c}{2 B^\alpha}
        \;.
    \]
    At each time $t \in [T]$, if the algorithm chosen by the optimizer queried $X_1,\dots, X_t$, the environment returns the fuzzy evaluation $J_t \coloneqq J(\fB_{X_t,t})$, where we recall that $\fB_{x,t}$ was defined, for any $x \in I$, by $\sum_{s=1}^t b_s \I \{X_s = x\}$.
    Note that the environment satisfies \Cref{ass:budget} and that both functions $f_{\pm}$ are $\frac{c}{|I| B^\alpha }$-Lipschitz.
    Moreover, the algorithm provides the same queries and recommendations for both $f^-$ and $f^+$, as it receives the same $J_1,\dots, J_T$.
    Furthermore, if the algorithm recommends $R_T \ge \fracc{(I^- + I^+)}{2}$ then $f_-(R_T) - \inf_{x\in I}f_-(x) \ge \fracc{c}{(2B^\alpha)}$, while if the algorithm recommends $R_T < \fracc{(I^- + I^+)}{2}$ then $f_+(R_T) - \inf_{x\in I}f_+(x) \ge \fracc{c}{(2B^\alpha)}$. 
    Thus, in both cases there exists a $\frac{c}{|I| B^\alpha }$-Lipschitz convex function $f \in \{ f_-,f_+ \}$ for which:
    \[
        f(R_T) - \inf_{x\in I}f(x) 
    \ge
        \frac{1}{4} \cdot  \frac{c}{(\sum_{t=1}^T b_t)^{\alpha}} + \frac{1}{32e} L \labs I e^{- \frac{\sum_{t=1}^T b_t}{ \max_{t \in [T]} b_t } } \;.
    \]
    
    Assume now $\frac{1}{4} \cdot  \frac{c}{(\sum_{t=1}^T b_t)^{\alpha}} < \frac{1}{32e} \cdot L \labs{I} \exp \lrb{- \frac{\sum_{t=1}^T b_t}{\max_{t \in [T]} b_t}}$.
    In this case, at each time $t \in [T]$, the environment returns $J_t \coloneqq\{ f(X_t) \}$. 
    Note that, in this instance, our problem reduces to \emph{deterministic} convex optimization. 
    By a classic lower bound for deterministic convex optimization (see, e.g., \citealt[Theorem~3.2.8]{nesterov2018}), then, there exists an $L$-Lipschitz convex function $f \colon I \to \bbR$ for which
    \begin{align*}
    &
        f(R_T) - \inf_{x\in I}f(x) 
    > 
        \frac{1}{16e} L \labs I e^{-T} 
    = 
        \frac{1}{16e} L \labs I e^{- \frac{\sum_{t=1}^T b_t}{ \max_{t \in [T]} b_t } }
    \\
    &
    \qquad\qquad
    \ge
        \frac{1}{4} \cdot  \frac{c}{(\sum_{t=1}^T b_t)^{\alpha}} + \frac{1}{32e} L \labs I e^{- \frac{\sum_{t=1}^T b_t}{ \max_{t \in [T]} b_t } }
        \;.
    \end{align*}
    Being the interval $I$, the horizon $T$, the budget $B$, and the Lipschitz constant $L$ arbitrarily chosen, the theorem follows.
\end{proof}

\section{Experiments}\label{s:experiments}

In this section we illustrate our theoretical findings with an experimental evaluation. In particular, the purpose of the numerical experiments is to show that:

\begin{enumerate}
\item The dependence on the local Lipschitz constants in the upper bound of \Cref{t:upper} provides a concrete advantage over a much simpler dependence on the global Lipschitz constant. 

\item The lower bound of \Cref{t:lower} is quite tight in practice. 
\end{enumerate}

Each experiment is implemented by specifying the oracle in terms of the function $f$, the parameters $c, \alpha, b_t$ and the location of $f(x)$ within the intervals $J_{x, t}$.

\subsection{Adaptive Lipschitz constants}
This experiment aims to show the advantages of featuring the local Lipschitz constant, as given in Theorem 1, over a bound that uses the global constant. To this end, we optimize the function $f(x) = -\sqrt{x} + 1$ over the interval $(a, 1)$ with $a > 0$. The Lipschitz constant of $f$ grows roughly as $\nicefrac{1}{\sqrt{a}}$. Moreover, $f(x^*) = 0$ and the maximum value is smaller than $1$, thus any meaningful upper bound on the optimization error should be smaller that $1$. In the experiment we set $a=0.001$, $c=0.1, \alpha=1, b_t=1$ (for all $t$), and we let the intervals $J_{x,t}$ to be symmetric around $f(x)$. We run the Dyadic Search for $T=1000$ iterations. 

\cref{fig:ub} shows that the upper bound featuring the local Lipschitz constants (local) is much tighter than that featuring the global constant (global). In particular, as denoted by the vertical lines, the former falls below 1 after 84 iterations, while the latter becomes non-trivial only at the iteration $339$. 

\subsection{Tightness of the lower bound}
This experiment aims to show the tightness of our lower bound. The oracle is based on the setting described in the first part of the proof of Theorem 2 with parameters $c=0.1, \alpha=1/2, b_t = 1$ for all $t$. 
Since the lower bound holds in the finite horizon setting, we set $T=1000$, and for each $t \in [T]$ we run the Dyadic Search for $t$ iterations; then we measure the optimization error $f(R_t)-f(x^*)$. Notice that for each $t$, the algorithm actually solves a different instance of the problem where the overall budget $B$ is set to $t$.

\Cref{fig:lb} shows the optimization errors of Dyadic Search, as measured at the end of each run of $t$ iterations, along with the lower and the upper bounds. 
The actual performance of the algorithm is relatively close to the lower bound and essentially follows the same pattern.

\begin{figure*}[t]
\centering
\subfigure[Local vs. Global bounds]{\label{fig:ub}\includegraphics[width=0.49\textwidth]{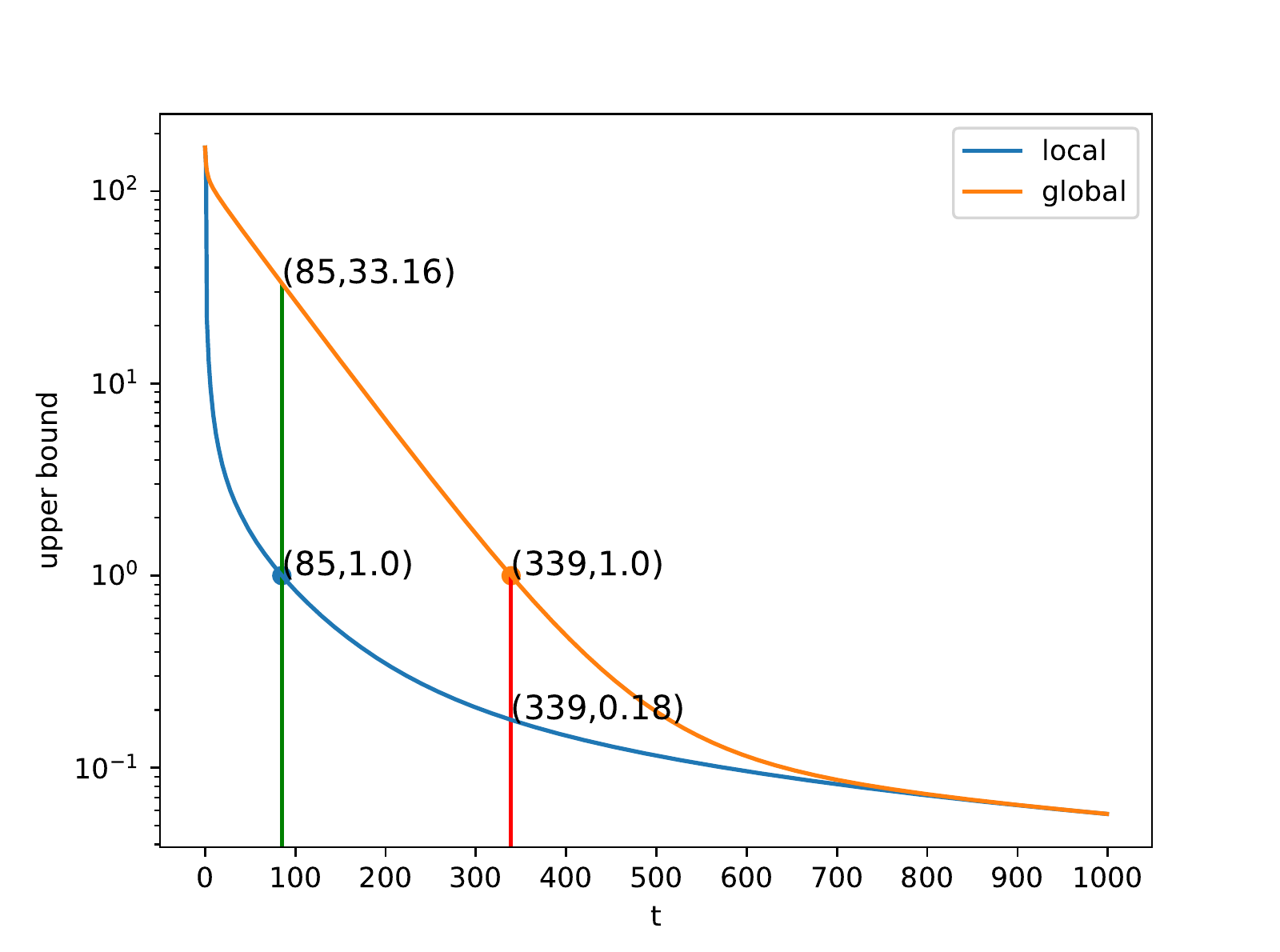}}
\subfigure[Confidence error bounds]{\label{fig:lb}\includegraphics[width=0.49\textwidth]{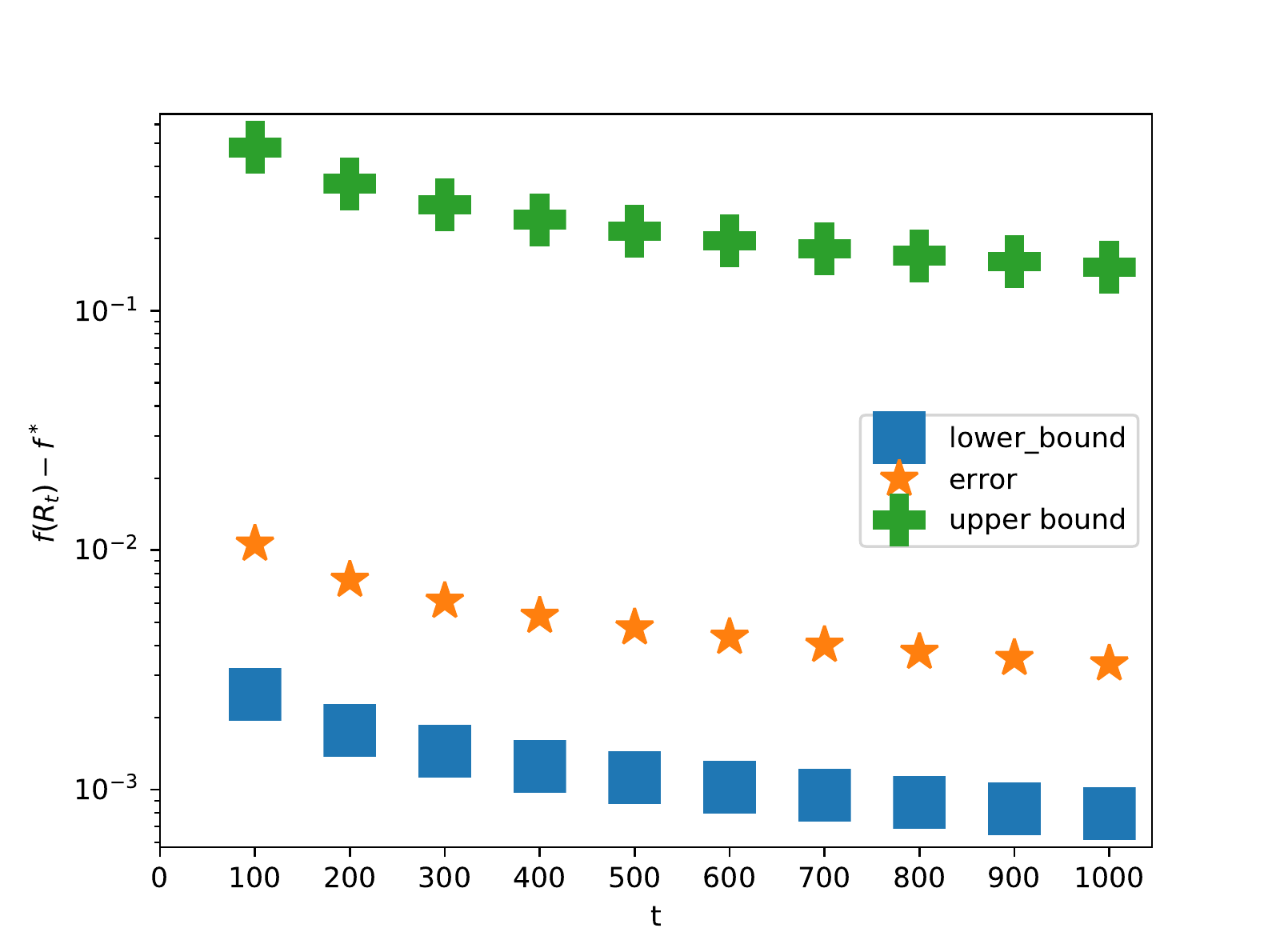}}
\caption{Experimental results. In (a) the vertical green line raises at the first point where the local falls below $1$; the red line raises at the first point where global falls below one. In (b) each triple of point (blue square, orange star, green cross) denotes respectively the lower bound, the actual error of Dyadic Search and the upper bound on the given instance.}
\label{fig:exps}
\end{figure*}

\section{Conclusions}\label{s:conclusions}

We studied a budget version of the classic zeroth-order univariate online convex optimization problem. 
We designed an any-time, any-budget algorithm, called Dyadic Search (\Cref{a:dyadic}), that does not require the \emph{a priori} knowledge of the (local) \lip{} constant of the objective and works even in an adversarial and adaptive environment. 
We proved theoretical optimization error guarantees for Dyadic Search (\Cref{t:upper}) that scale with the quality of the budget (\Cref{ass:budget}) and the relevant parameters of the problem (domain size and local \lip{}ness). 
Additionally, we showed that no algorithm can significantly outperform Dyadic Search, not even if it had the freedom to select the time horizon $T$ and the total budget $B$ (\Cref{t:lower}). 
As a side-effect of our analysis, we obtain that the closer to a continuous setting our optimization problem is (i.e., the smaller the allowed budgets $b_t$ are), the milder the dependence on the local \lip{}ness of the objective, vanishing at the limit.
To our knowledge, this is the first analysis of a convex optimization problem that shows (in the limit of the infinitesimally fine budget granularity) that no dependence on any \lip{} constant is needed in upper bounds.

\appendix

\bibliographystyle{plainnat}
\bibliography{biblio}

\section*{Acknowledgments}

Tommaso Cesari gratefully acknowledges the support of IBM.

\section{Full proof of Theorem~\ref{t:upper}}
\label{s:appe-proof-ub}

In this section, we give a detailed proof of Theorem~\ref{t:upper}.

\begin{proof}[Proof of \Cref{t:upper}]
Fix any bounded interval $I\s \bbR$, a time $T\in \N$, and a convex function $f\colon I \to \bbR$.
Without loss of generality, we can (and do!) assume that $I$ contains at least two distinct points.\footnote{Otherwise, the optimization error is trivially zero.}
Moreover, without loss of generality, we can also (and do!) assume that $f$ attains its minimum in $I$.\footnote{%
Indeed, if it does not, then there exists $x^\star \in \{I^-,I^+\}$ such that $\lim_{x\to x^\star, x \in I} f(x) = \inf_{x\in I}f(x)$ (this can only happen if $I$ is not closed or $f$ is discontinuous at $x^\star$; in the latter case, note that by convexity, $f(x^\star) > \lim_{x\to x^\star, x \in I} f(x) = \inf_{x\in I}f(x)$).
Thus, noting that \algName{} \emph{never} queries nor recommends the endpoints $\{I^-,I^+\}$ of $I$, one can replace $f$ with $\bar f$, where $\bar f(x) \coloneqq f(x)$ for all $x \in I$ and $\bar f (x^\star) \coloneqq \inf_{x\in I}f(x)$.
This way, up to extending (or redefining) $f$ at $x^\star$, we are left with a convex function $\bar f$ such that $\bar{f}(X_1) \in J_1,\dots, \bar{f}(X_T) \in J_T$, attains its minimum at $x^\star$ (in its domain), and satisfies $f(R_T) - \inf_{x\in I}f(x) = \bar f(R_T) - \bar f(x^\star)$.%
}
Then, note that the active interval $I_\tau$ of any epoch $\tau \in [\tau_T]$ (defined in the initialization and updated at \Cref{state:update}) always contains at least a minimizer, because the first active interval $I_1$ is the entire domain $I$ and, by the unimodality of $f$ and the definition of the $\fdel$ and $\fupd$ functions, \algName{} deletes a fraction of the active interval (\Cref{state:update}) only if it is certain that the value of $f$ at one of the remaining points is no bigger than \emph{all} of the deleted points.
Thus, there exists (and we fix for the rest of the proof) an $x^\star \in I$ such that  $x^\star \in I_{\tau_T} \s \dots \s I_1 $ and $f(x^\star) = \min (f)$.
Recall that, for all $t\in \N$, $\tau_t$ is the epoch of round $t$ (\Cref{state:extra-command} of \Cref{a:dyadic}).
Also, for the sake of convenience, we define $t_0 \coloneqq 0$ and, if the last epoch is not concluded exactly at the end of time $T$, we redefine $t_{\tau_T} \coloneqq T$ and $B_{\tau_T} \coloneqq B_{\tau_T,T}$.

To prove the result, we analyze separately the performance of the recommendation $R_T$ of \algName{} in the three cases of \Cref{state:recommendationone,state:recommendationonebis,state:recommendationtwo}.

Assume at first that $\del_T \neq \cancNone$ (i.e., the condition on \Cref{state:recommendation-one} is true and we recommend $R_T$ as in \Cref{state:recommendationone}).
We partition the epochs $\tau \in [\tau_T]$ into four sets, depending on whether or not the epoch is uniform and whether or not $x^\star\le c_\tau$. 
More precisely, for any $\zeta \in \{\unif,\nunif\}$ and $\vdash \; \in \{\le, >\}$, we let $A_{\zeta,\vdash}$ be the set of all epochs $\tau \in [\tau_T]$ such that $\tht_\tau = \zeta$ and 
$x^\star \vdash c_\tau$.
(or, equivalently stated, that there exist at least two distinct elements $x,y\in \{l_\tau, c_\tau, r_\tau\}$ such that $x^\star \vdash x \vdash y$).
Now fix 
$
A
\coloneqq
    A_{\zeta,\vdash}
$,
where
\[
    (\zeta,\vdash)
\in 
    \argmax_{(\zeta',\vdash')\in \{\unif,\nunif\} \times \{\le, >\}} \sum_{\tau \in A_{\zeta',\vdash'}} B_\tau \;.
\]
In words, $A$ is the set of epochs $A_{\zeta',\vdash'}$ where the Dyadic Search spent the highest budget.
Define, for each $\tau \in A$, the points $x_\tau \neq y_\tau$ as the closest and second-closest points in $\{l_\tau, c_\tau, r_\tau\}$ to $x^\star$ such that $x^\star \vdash x_\tau \vdash y_\tau$ (they always exist by definition of $A$).
More precisely,
\begin{align*}
    x_\tau & \coloneqq \argmin_{x \in \{l_\tau, c_\tau, r_\tau\}, \, x^\star \vdash x}  \labs{x - x^\star } \;, 
\\
    y_\tau & \coloneqq \argmin_{x \in \{l_\tau, c_\tau, r_\tau\}\m\{x_\tau\}, \, x^\star \vdash x}  \labs{x - x^\star } 
    \;.
\end{align*}
Let $n$ be the number of elements of $A$ and $\kappa_1, \dots, \kappa_n$ be the elements of $A$ in increasing order.
Then, for any $i\in\N$, we have
\begin{align}
    \labs{ I_{\kappa_{i}} } & \le \labs{ I_{\kappa_{i-1}} }/2 && \text{ if } 2 \le i \le n \label{align:five}\;, \\
    \labs{ y_{\kappa_i} - x_{\kappa_i} } & \le \labs{ y_{\kappa_{i-1}} - x_{\kappa_{i-1}} }/2 && \text{ if } 2 \le i \le n \;, \label{align:one} \\
    \labs{ x_{\kappa_i} - x^\star } & \le \labs{ y_{\kappa_i} - x_{\kappa_i} } \cdot 2 && \text{ if } i \le n \;, \label{align:two} \\
    \labs{ x_{\kappa_{i}} - x^\star } & \le \labs{ x_{\kappa_{i-1}} - x^\star } && \text{ if } 2 \le i \le n \label{align:three} \;, \\
    \labs{ y_{\kappa_{i}} - x^\star } & \le \labs{ y_{\kappa_{i-1}} - x^\star } && \text{ if } 2 \le i \le n \label{align:four}\;.
\end{align}
Here, \eqref{align:five} follows directly by the definition of the $\fupd$ function, noting that there are never two uniform or two non-uniform epochs in a row, unless half of the current interval is eliminated in one single call of the $\fupd$ function. 
Inequality \eqref{align:one} follows directly from \eqref{align:five}. Inequality \eqref{align:two} is a consequence of the definitions of the partition functions $\unif$ and $\nunif$.
To prove \eqref{align:three}, note first that the claim holds trivially when $x_{\kappa_{i-1}} \in \{l_{\kappa_{i}},c_{\kappa_{i}},r_{\kappa_{i}}\}$.
When this is not the case, since \algName{} discarded $x_{\kappa_{i-1}}$ at the end of some previous epoch, it either holds that $I_{\kappa_{i}} \subset (-\iop,x_{\kappa_{i-1}}]$ or $I_{\kappa_{i}} \subset [x_{\kappa_{i-1}},\iop)$. If $x^\star \le x_{\kappa_{i-1}}$ (meaning that $\vdash = \le$), it follows from $x^\star \in I_{\kappa_{i}}$ that $I_{\kappa_{i}} \subset(-\iop,x_{\kappa_{i-1}}]$, which implies $x^\star \le x_{\kappa_{i}} < x_{\kappa_{i-1}}$.
Analogously, if $x^\star > x_{\kappa_{i-1}}$ (meaning that $\vdash = >$), it follows from $x^\star \in I_{\kappa_{i}}$ that $I_{\kappa_{i}} \subset [x_{\kappa_{i-1}},\iop)$, which implies $x^\star > x_{\kappa_{i}} > x_{\kappa_{i-1}}$.
This proves \eqref{align:three}.
Moreover, as a direct consequence of \eqref{align:one} and \eqref{align:three}, we obtain \eqref{align:four}. 

By construction, we have that
\begin{equation}
\label{eq:greater_budget}
    4 \sum_{\tau \in A} B_\tau
\ge
    \sum_{\tau \in [\tau_T] } B_\tau
=
    \sum_{t=1}^T b_t \;.    
\end{equation}
Now, we show that for any $\tau \in [\tau_T]$ and $k \in \{0,\dots,t_\tau\}$, we have
\begin{equation}
\label{eq:equipartition}
    \min_{x\in \{l_\tau, c_\tau, r_\tau\}} \fB_{x,\, t_\tau -k}
\ge
    \frac {B_\tau - (2+k) \max_{t \in [T]} b_t} 3 \;,
\end{equation}
i.e., that the total budget $\fB_{x,\, t_\tau -k}$ spent on any query point $x\in \{l_\tau, c_\tau, r_\tau\}$ up to time $t_\tau - k$ is no smaller (up to $\Theta (k) \cdot \max_{t \in [T]} b_t $) than the budget $B_\tau$ spent (on all query points) only during epoch $\tau$.
Indeed, for any $\tau \in [\tau_T]$ and $k \in \{0,\dots,t_\tau\}$, letting $x_{\min} \in \argmin_{{x\in \{l_\tau, c_\tau, r_\tau\}}} \fB_{x,\, t_\tau -k}$ be a query point where the algorithm spent the least amount of budget up to time $t_\tau - k$ and $M \coloneqq \bcb{ x\in \{l_\tau, c_\tau, r_\tau\} : \fB_{x,t_\tau -k} - \fB_{x_{\min},t_\tau -k} \le  \max_{t \in [T]} b_t }$ be the set of all query points in which the algorithm spent a budget close to that of $x_{\min}$, we have
\begin{align*}
&
    3 \cdot \min_{x\in \{l_\tau, c_\tau, r_\tau\}} \fB_{x,\, t_\tau -k}
\overset{\phantom{(*)}}{\ge}
    \sum_{ x\in M } 
    \fB_{x, t_\tau - k} -2  \max_{t \in [T]} b_t
\\
&
\qquad\qquad
\overset{\phantom{(*)}}{\ge}
    \sum_{ x\in M } 
    \sum_{t = t_{\tau-1} +1 }^{t_\tau - k} b_t \I \{X_t = x \} 
    - 2  \max_{t \in [T]} b_t
\\
&
\qquad\qquad
\overset{(*)}{=}
    B_\tau - \sum_{t = t_\tau - k + 1}^{t_\tau} b_t - 2  \max_{t \in [T]} b_t
\\
&
\qquad\qquad
\overset{\phantom{(*)}}{\ge}
    B_\tau - (2+k)  \max_{t \in [T]} b_t
\end{align*}
with the understanding that any sum $\sum_{s=i}^j z_s$ is equal to zero whenever $i > j$, and where in $(*)$ we used the fact that, if $x \in \{l_\tau, c_\tau, r_\tau\}$ is such that $\fB_{x,t_\tau -k} - \fB_{x_{\min},t_\tau -k} > \max_{t \in [T]} b_t$, then \algName{} never queried $x$ in epoch $\tau$ up to time $t_\tau - k$, i.e., $\sum_{t = t_{\tau-1} + 1}^{t_\tau - k} b_t = 0$.
This proves \eqref{eq:equipartition}.

Thus, for any $\tau \in A$, if $B_\tau > 3  \max_{t \in [T]} b_t$, since $J^+_{x_\tau, t_\tau - 1} > J^-_{y_\tau, t_\tau - 1}$ (this follows from the definition of the $\fdel$ function and can be proved by exhaustion), it holds that
\begin{align}
&
    f(y_\tau)-f(x_\tau)
\le
    J^+_{y_\tau, t_\tau - 1} - J^-_{x_\tau, t_\tau - 1}
    \nonumber
\\
&
\quad\overset{\phantom{\eqref{eq:equipartition}}}{<}
    J^+_{y_\tau, t_\tau - 1} - J^-_{y_\tau, t_\tau - 1} + J^+_{x_\tau, t_\tau - 1} - J^-_{x_\tau, t_\tau - 1}
    \nonumber
\\
&
\quad\overset{\phantom{\eqref{eq:equipartition}}}{=}
    \labs{J_{y_\tau, t_\tau - 1}} + \labs{J_{x_\tau, t_\tau - 1}}
\le
    \frac{c}{\fB_{y_\tau,\, t_\tau -1}^\alpha} + \frac{c}{\fB_{x_\tau,\, t_\tau -1}^\alpha}
    \nonumber
\\
&
\quad\overset{\phantom{\eqref{eq:equipartition}}}{\le}
    \frac{2 \cdot c}{\lrb{ \min_{x\in \{l_\tau, c_\tau, r_\tau\}} \fB_{x,\, t_\tau -1} }^\alpha}
    \nonumber
\\
&
\quad\overset{\eqref{eq:equipartition}}{\le}
    \frac{3^\alpha \cdot 2 \cdot c}{\brb{B_\tau - 3 \max_{t \in [T]} b_t }^\alpha} \;.
    \label{e:giacomo}
\end{align}
Assume now that $f(y_{\kappa_n}) - f(x_{\kappa_n}) > 0$.
Then, for any $\tau \in A$, by convexity and inequalities \eqref{align:three}--\eqref{align:four}, we have that $f(y_\tau) - f(x_\tau) > 0$ is also true.
By \eqref{e:giacomo}, it follows that, for any $i \in [n]$, regardless of the fact that the inequality $B_{\kappa_i} > 3 \max_{t \in [T]} b_t$ holds or not,
\begin{equation}
\label{eq:budget_bound_1}
    B_{\kappa_i}
\le
    3 \max_{t \in [T]} b_t + \frac{3 \cdot (2 \cdot c)^{1/\alpha}}{ \brb{ f(y_{\kappa_i}) - f(x_{\kappa_i}) }^{1/\alpha} } \;.
\end{equation}
Summing \eqref{eq:budget_bound_1} over $i \in [a]$, we obtain
\begin{align}
    \sum_{t=1}^T b_t
\overset{\eqref{eq:greater_budget}}&{\le}
    4 \sum_{\tau \in A} B_\tau
=
    4 \sum_{i = 1}^n B_{\kappa_i}
    \nonumber
\\
\overset{\eqref{eq:budget_bound_1}}&{\le}
    4 \sum_{i = 1} ^ n \lrb{ 3 \max_{t \in [T]} b_t + \frac{3 \cdot (2 \cdot c)^{1/\alpha}}{ \brb{ f(y_{\kappa_i}) - f(x_{\kappa_i}) }^{1/\alpha} } }
\nonumber
\\
\overset{}&{=}
    12 \cdot \max_{t \in [T]} b_t \cdot n + \sum_{i = 1 }^n \frac{12 \cdot (2 \cdot c)^{1/\alpha}}{ \brb{ f(y_{\kappa_i}) - f(x_{\kappa_i}) }^{1/\alpha} } \;.
    \label{e:random-inequality}
\end{align}
Now, using \eqref{align:three}--\eqref{align:four} together with the fact that difference quotients of a convex function are non-decreasing in both variables, and since for each $i \in [n]$ it holds that $(y_{\kappa_n}-x_{\kappa_n})\cdot (y_{\kappa_i}-x_{\kappa_i})>0$, we have
\begin{align}
&
    \sum_{i = 1 }^n \frac{ \brb{ f(y_{\kappa_n}) - f(x_{\kappa_n}) }^{1/\alpha} }{ \brb{ f(y_{\kappa_i}) - f(x_{\kappa_i}) }^{1/\alpha} }
=
    \sum_{i = 1 }^n \lrb{ \frac{f(y_{\kappa_n}) - f(x_{\kappa_n})}{ f(y_{\kappa_i}) - f(x_{\kappa_i}) } }^{1/\alpha}
    \nonumber
\\
\overset{}&{=}
    \sum_{i = 1 }^n  \lrb{ \lrb{ \frac{ \frac{f(y_{\kappa_n}) - f(x_{\kappa_n})}{ y_{\kappa_n} - x_{\kappa_n}} }{ \frac{ f(y_{\kappa_i}) - f(x_{\kappa_i}) }{ y_{\kappa_i} - x_{\kappa_i} } }  }^{1/\alpha} \cdot \labs{ \frac{ y_{\kappa_n} - x_{\kappa_n} }{ y_{\kappa_i} - x_{\kappa_i} } } ^{1/\alpha} }
    \nonumber
\\
\overset{\eqref{align:three}+\eqref{align:four}}&{\le}
    \sum_{i = 1 }^n   \labs{ \frac{ y_{\kappa_n} - x_{\kappa_n} }{ y_{\kappa_i} - x_{\kappa_i} } } ^{1/\alpha} 
\overset{\eqref{align:one}}{\le}
    \sum_{i = 1 }^n   \lrb{ \frac{1}{2^{n-i}}} ^{1/\alpha}
\label{eq:chaining}
\overset{\phantom{\eqref{align:one}}}{\le}
    \frac{1}{1-2^{-1/\alpha}} 
    \;.
\end{align}
Putting \eqref{e:random-inequality} and \eqref{eq:chaining} together, we obtain the inequality
\[
    \sum_{t=1}^T b_t
\le
    12 \cdot \max_{t \in [T]} b_t \cdot n + \frac{ 12 \cdot (2 \cdot c)^{\fracc{1}{\alpha}} \cdot \frac{1}{1-2^{-\fracc{1}{\alpha}}} }{\brb{ f(y_{\kappa_n}) - f(x_{\kappa_n}) }^{1/\alpha}} \;,
\]
that can be rearranged, whenever $\sum_{t=1}^T b_t \ge 24 \cdot \max_{t \in [T]} b_t \cdot n$, to obtain the inequality
\begin{equation}
\label{eq:y_vs_x}
    f(y_{\kappa_n}) - f(x_{\kappa_n})
\le
    4  \cdot\lrb{ \frac{24}{2^{1/\alpha} - 1} } ^\alpha \cdot \frac{c}{ (\sum_{t=1}^T b_t)^\alpha } \;.
\end{equation}
Then, relying again on the fact that difference quotients of a convex function are non-decreasing in both variables, and that $(y_{k_n}-x_{k_n})\cdot (x_{k_n}-x^\star)\ge0$, whenever $\sum_{t=1}^T b_t \ge 24 \cdot \max_{t \in [T]} b_t \cdot n$ and $x_{\kappa_n}\neq x^\star$, we have that

\begin{align}
&
    f(x_{\kappa_n}) - f(x^\star)
\overset{\phantom{\eqref{eq:y_vs_x}}}{=}
    \frac{f(x_{\kappa_n}) - f(x^\star)}{x_{\kappa_n}-x^\star} \cdot (x_{\kappa_n}-x^\star)
    \nonumber
\\
\overset{}&{\le}
    \frac{f(y_{\kappa_n}) - f(x_{\kappa_n})}{y_{\kappa_n}-x_{\kappa_n}} \cdot (x_{\kappa_n}-x^\star)
\nonumber
\\
\overset{\eqref{eq:y_vs_x}}&{\le}
    4 \cdot\lrb{ \frac{24}{2^{1/\alpha} - 1} } ^\alpha \cdot \frac{c}{ (\sum_{t=1}^T b_t)^\alpha } \cdot \labs{\frac{ x_{\kappa_n} - x^\star }{ y_{\kappa_n} - x_{\kappa_n} }}
\nonumber
\\
\overset{\eqref{align:two}}&{\le}
\label{eq:x_vs_xstar}
    8 \cdot \lrb{ \frac{24}{2^{1/\alpha} - 1} } ^\alpha \cdot \frac{c}{ (\sum_{t=1}^T b_t)^\alpha }
\end{align}
(note that \eqref{eq:x_vs_xstar} is trivially true also when $x_{\kappa_n} = x^\star$)
and
\begin{align}
\label{eq:y_vs_xstar}
&
    f(y_{\kappa_n}) - f(x^\star)
=
    f(y_{\kappa_n}) - f(x_{\kappa_n}) + f(x_{\kappa_n}) - f(x^\star)
\nonumber
\\
&\qquad\overset{\eqref{eq:y_vs_x} + \eqref{eq:x_vs_xstar}}{\le}
     12 \cdot \lrb{ \frac{24}{2^{1/\alpha} - 1} } ^\alpha \cdot \frac{c}{ (\sum_{t=1}^T b_t)^\alpha } \;.
\end{align}
Recall that the derivations for \eqref{eq:x_vs_xstar} and \eqref{eq:y_vs_xstar} were carried out under the assumption that $f(y_{\kappa_n}) - f(x_{\kappa_n}) > 0$.
If this assumption does not hold, by convexity, $f(y_{\kappa_n}) = f(x_{\kappa_n}) = f(x^\star)$, therefore \eqref{eq:x_vs_xstar} and \eqref{eq:y_vs_xstar} are still (trivially) true.

Now, we will prove that the recommendation $R_T$ is approximately at least as good as $x_{\kappa_n}$ and $y_{\kappa_n}$.
Since under the assumption that $\sum_{t=1}^T b_t \ge 24 \cdot n \cdot \max_{t \in [T]} b_t$ we showed in \eqref{eq:x_vs_xstar} and \eqref{eq:y_vs_xstar} that $x_{\kappa_n}$ and $y_{\kappa_n}$ are both near-minimizers, this will yield under the same assumption that $R_T$ is also a near-minimizer.

Recalling that we are currently assuming $\del_T \neq \cancNone$, we have that $R_T \in \argmin_{x\in \{l_{\tau_T+1}, c_{\tau_T+1}, r_{\tau_T+1}\}} J_{x,T}^+$, which (as can be checked directly) implies in turn that $R_T \in  \argmin_{x\in \{l_{\tau_T}, c_{\tau_T}, r_{\tau_T}\}} J_{x,T}^+$ and, whenever $\sum_{t=1}^T b_t \ge 24 \cdot n \cdot \max_{t \in [T]} b_t$:
\begin{enumerate}
    \item If $R_T \in \{x_{\kappa_n}, y_{\kappa_n}\}$, then \eqref{e:upper-bound} follows by \eqref{eq:x_vs_xstar} and \eqref{eq:y_vs_xstar}.
    \item If $R_T \notin \{x_{\kappa_n}, y_{\kappa_n}\}$, and $\{x_{\kappa_n}, y_{\kappa_n}\} \s \{l_{\tau_T}, c_{\tau_T}, r_{\tau_T}\}$, then there exists $x\in \{l_{\tau_T}, c_{\tau_T}, r_{\tau_T}\} \m \{R_T\} = \{x_{\kappa_n}, y_{\kappa_n}\}$ such that $f(R_T) \le J_{R_T,T}^+ \le J_{x,T}^- \le f(x)$;
    therefore, \eqref{e:upper-bound} follows by \eqref{eq:x_vs_xstar} and \eqref{eq:y_vs_xstar}.
    \item If $R_T \notin \{x_{\kappa_n}, y_{\kappa_n}\}$, and $\{x_{\kappa_n}, y_{\kappa_n}\} \nsubset \{l_{\tau_T}, c_{\tau_T}, r_{\tau_T}\}$, then, 
    since at least one between $x_{\kappa_n}$ and $y_{\kappa_n}$ does not belong to the set of active query points $\{l_{\tau_T}, c_{\tau_T}, r_{\tau_T}\}$ at time $T$, there exist a past time $t \in [T]$ and a past query point $x \in \{l_{\tau_t}, c_{\tau_t}, r_{\tau_t}\}$ such that $J_{x,t}^+ \le \max \brb{ J_{x_{\kappa_n},t}^-, \, J_{y_{\kappa_n},t}^- }$;
    therefore, noting that the sequence $s \mapsto \min_{x'\in\{l_{\tau_s}, c_{\tau_s}, r_{\tau_s} \}} J_{x',s}^+$ is non-increasing, 
     we have
    $
        f (R_T)
    \le
        J_{R_T,T}^+
    \le
        J_{x,t}^+
    \le
        \max \brb{ J_{x_{\kappa_n, t}}^-,\, J_{y_{\kappa_n,t}}^- }
    \le
        \max \brb{ f(x_{\kappa_n}) , \, f(y_{\kappa_n})}
    $
    and \eqref{e:upper-bound} follows by \eqref{eq:x_vs_xstar} and \eqref{eq:y_vs_xstar}.
\end{enumerate}
On the other hand, if $\sum_{t=1}^T b_t < 24 \cdot  n \cdot  \max_{t \in [T]} b_t$, then
\begin{align*}
&
    f(R_T) - f(x^\star)
\le
    \frac{3}{4} L \labs{I_{\tau_T+1}}
\le
    \frac{9}{16} L \labs{I_{\kappa_n}}
\\
& \qquad
\overset{\eqref{align:five}}{\le}
    \frac{9}{16} L \labs{I} \lrb{\nicefrac{1}{2}}^{n-1}
\le
    \frac{9}{16} L \labs I \lrb{\nicefrac12}^{\frac{ \sum_{t=1}^T b_t  }{24 \max_{t \in [T]} b_t}-1}
\\
& \qquad
\overset{\phantom{\eqref{align:five}}}{=}
    \frac{9}{8}L \labs I \exp \lrb{- \frac{\ln 2}{24} \frac{\sum_{t=1}^T b_t} {\max_{t \in [T]} b_t} } \;,
\end{align*}
where $L$ is the smallest between the \lip{} constants of $f$ on $[l_{\tau_T}, r_{\tau_T}]$ and $[l_{\tau_T+1}, r_{\tau_T+1}]$ ---indeed, by convexity, $L$ is a \lip{} constant for $f$ on the convex hull of $\{x^\star, l_{\tau_T}, r_{\tau_T} \}$ (resp., $\{x^\star, l_{\tau_{T+1}}, r_{\tau_{T+1}} \}$) if and only if it is a \lip{} constant on $[l_{\tau_T}, r_{\tau_T}]$ (resp., $[l_{\tau_{T+1}}, r_{\tau_{T+1}}]$).
Putting everything together yields \eqref{e:upper-bound} when $\del_T \neq \cancNone$.

Assume now that $\del_T = \cancNone$ and $B_{\tau_T} \ge \sum_{\tau' = 0}^{\tau_T -1} B_{\tau'}$ (i.e., the condition on \Cref{state:recommendation-one-bis} is true and we recommend $R_T$ as in \Cref{state:recommendationonebis}).
This implies that the three intervals $J_{l_{\tau_T}, T}, J_{c_{\tau_T}, T}, J_{r_{\tau_T}, T}$ have non-empty intersection, which in turn implies that
\begin{multline}
    \max_{x' \in \{l_{\tau_T}, c_{\tau_T}, r_{\tau_T}\}} J_{x',T}^+  - \min_{x' \in \{l_{\tau_T}, c_{\tau_T}, r_{\tau_T}\}} J_{x',T}^-
\\
\quad \quad
\le
    2 \max_{x' \in \{l_{\tau_T}, c_{\tau_T}, r_{\tau_T}\}} \labs{ J_{x',T} } \;.
\label{eq:skewer}
\end{multline}
Now, we define $f_1,f_2,f_3,f_4$ as the four functions whose graphs are straight lines such that $f_1$ passes through the points $(l_{\tau_T},J^-_{l_{\tau_T},T})$ and $(r_{\tau_T},J^+_{r_{\tau_T},T})$, $f_2$ passes through the points $(c_{\tau_T},J^-_{c_\tau,T})$ and $(r_{\tau_T},J^+_{r_{\tau_T},T})$, $f_3$ passes through the points $(l_{\tau_T},J^+_{l_{\tau_T},T})$ and $(c_{\tau_T},J^-_{c_{\tau_T},T})$, and $f_4$ passes through the points $(l_{\tau_T},J^+_{l_{\tau_T},T})$ and $(r_{\tau_T},J^-_{r_{\tau_T},T})$ (\Cref{f:lines}).
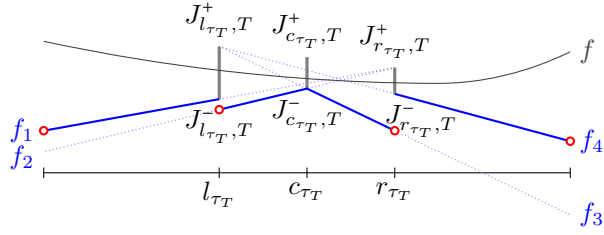
\begin{figure}
    \centering
    \begin{tikzpicture}[scale=7]
    \draw (0,0) -- (1,0);
    \foreach \x in {0, 1/3, 1/2, 2/3, 1}
    { 
        \draw (\x, -0.01) -- (\x, 0.01);
    }
    \draw[very thick, gray] 
        (1/3, 0.14) -- (1/3, 0.24)
        (1/2, 0.16) -- (1/2, 0.22)
        (2/3, 0.15) -- (2/3, 0.20)
    ;
    \draw[densely dotted, blue!50!white] 
        (1/3, 0.14) -- (2/3, 0.20) -- (1/2, 0.16)
        (1/3, 0.12) -- ( 0 , 0.04)
        (2/3, 0.15) -- (1/3, 0.24) -- (1/2, 0.16) 
        (2/3, 0.08) -- ( 1 ,-0.08)
    ;
    \draw[thick, blue] 
        ( 0 , 0.08) -- (1/3, 0.14)
        (1/3, 0.12) -- (1/2, 0.16) -- (2/3, 0.08)
        ( 1 , 0.06) -- (2/3, 0.15)
    ;
    \draw[red, thick, fill=white] 
        (0, 0.08) circle (0.2pt)
        (1/3, 0.12) circle (0.2pt)
        (2/3, 0.08) circle (0.2pt)
        (1, 0.06) circle (0.2pt)
    ;
    \draw 
        (1/3, 0.14) node[below] {$J_{l_{\tau_T},T}^-$}
        (1/3, 0.24) node[above] {$J_{l_{\tau_T},T}^+$}
        (1/2, 0.16) node[below] {$J_{c_{\tau_T},T}^-$}
        (1/2, 0.22) node[above] {$J_{c_{\tau_T},T}^+$}
        ({2/3 + 0.05}, 0.15) node[below] {$J_{r_{\tau_T},T}^-$}
        (2/3, 0.20) node[above] {$J_{r_{\tau_T},T}^+$}
        (1/3, 0) node[below] {$l_{\tau_T}$}
        (1/2, 0) node[below] {$c_{\tau_T}$}
        (2/3, 0) node[below] {$r_{\tau_T}$}
        (0, 0.08) node[left, blue] {$f_1$}
        (0, {0.04 - 0.01}) node[left, blue] {$f_2$}
        (1,-0.08) node[right, blue] {$f_3$}
        (1, 0.06) node[right, blue] {$f_4$}
        (1, 0.23) node[right, darkgray] {$f$}
    ;
    \draw[darkgray] 
        (0, 0.25) parabola bend (3/4, 0.17) (1, 0.23)
    ;
    \end{tikzpicture}
    \caption{A representation of the four lines $f_1,\dots,f_4$. 
    By convexity, $f$ is lower bounded by the blue solid segments.
    Note that, since $J_{l_{\tau_T},T} \cap J_{c_{\tau_T},T} \cap J_{r_{\tau_T},T} \neq \varnothing$, then $f_1,f_2$ are nondecreasing and $f_3,f_4$ nonincreasing. 
    Therefore, the minimum of the $y$ coordinates of the red dots is a lower bound on the value of the function, which in turn implies \eqref{eq:straightlines}.}
    \label{f:lines}
\end{figure}
By the convexity of $f$, for each $x \in [I_{\tau_T}^-,l_{\tau_T}]$ we have $f(x) \ge f_1(x)$, for each $x \in [l_{\tau_T},c_{\tau_T}]$ we have $f(x) \ge f_2(x)$, for each $x \in [c_{\tau_T},r_{\tau_T}]$ we have $f(x) \ge f_3(x)$, and for each $x \in [r_{\tau_T},I_{\tau_T}^+]$ we have $f(x) \ge f_4(x)$. Writing down explicitly these four inequalities and upper bounding, we conclude that
\begin{align}
&
    f(x^\star)
\quad\ge
    \min_{x' \in \{l_{\tau_T}, c_{\tau_T}, r_{\tau_T}\}} J_{x,T}^- 
\nonumber
\\
&
    - 2 \lrb{ \max_{x' \in \{l_{\tau_T}, c_{\tau_T}, r_{\tau_T}\}} J_{x',T}^+  - \min_{x' \in \{l_{\tau_T}, c_{\tau_T}, r_{\tau_T}\}} J_{x',T}^- } \;.
\label{eq:straightlines}
\end{align}
Then, if $B_{\tau_T} \ge 4 \max_{t \in [T]} b_t$, for all $x \in \{l_{\tau_T}, c_{\tau_T}, r_{\tau_T}\}$, we have
\begin{align*}
&
    f(x) - f(x^\star)
\le
    J_{x,T}^+ - f(x^\star)
\\
\overset{\eqref{eq:straightlines}}&{\le}
    J_{x,T}^+ - \min_{x' \in \{l_{\tau_T}, c_{\tau_T}, r_{\tau_T}\}} J_{x,T}^- 
\\
&
\quad \quad
    + 2 \lrb{ \max_{x' \in \{l_{\tau_T}, c_{\tau_T}, r_{\tau_T}\}} J_{x',T}^+  - \min_{x' \in \{l_{\tau_T}, c_{\tau_T}, r_{\tau_T}\}} J_{x',T}^- } 
\\
\overset{\eqref{eq:skewer}}&{\le}
    6 \max_{x' \in \{l_{\tau_T}, c_{\tau_T}, r_{\tau_T}\}} \labs{ J_{x',T} }
\le
    6 \max_{x' \in \{l_{\tau_T}, c_{\tau_T}, r_{\tau_T}\}} \frac{c}{\fB_{x',T}^\alpha}
\\
\overset{\eqref{eq:equipartition}}&{\le}
    6 \frac{c}{ \lrb{ \frac{B_{\tau_T} - 2 \max_{t \in [T]} b_t  } {3} }^\alpha }
\le
    6 \cdot 3^\alpha \frac{c}{ (B_{\tau_T}/2)^\alpha }
\\
&
\le
    6 \cdot 12^\alpha \cdot \frac{c}{( \sum_{t=1}^T b_t )^\alpha}
\le
    12 \cdot\lrb{ \frac{48}{2^{1/\alpha} - 1} } ^\alpha \cdot \frac{c}{( \sum_{t=1}^T b_t )^\alpha}
    \;.
\end{align*}
If, on the other hand, $B_{\tau_T} < 4 \max_{t \in [T]} b_t$, since $B_{\tau_T} \ge \frac12 \sum_{t=1}^T b_t$, for all $x \in \{l_{\tau_T}, c_{\tau_T}, r_{\tau_T}\}$, we have
\begin{align*}
&
    f(x) - f(x^\star)
\le
    L \labs I (\nicefrac34)
\le
    L \labs I \lrb{\nicefrac12}^{\frac{4 \max_{t \in [T]} b_t}{12 \max_{t \in [T]} b_t}}
\\
&
\qquad
\le
    L \labs I \lrb{\nicefrac12}^{\frac{B_{\tau_T}}{12\max_{t \in [T]} b_t}}
\le
    L \labs I \lrb{\nicefrac12}^{\frac{\sum_{t=1}^T b_t}{24\max_{t \in [T]} b_t}}
\\
&
\qquad
=
     L \labs I \exp \lrb{ -\frac{\ln 2}{24}  \frac{{\sum_{t=1}^T b_t}}{\max_{t \in [T]} b_t } }
\end{align*}
where $L$ the \lip{} constant of $f$ on $[l_{\tau_T}, r_{\tau_T}]$.
Thus, adding together the two bounds for $B_{\tau_T} \ge 4 \max_{t \in [T]} b_t$ and $B_{\tau_T} < 4 \max_{t \in [T]} b_t$ yields  \eqref{e:upper-bound} when $\del_T = \cancNone$ and $B_{\tau_T} \ge \sum_{\tau' = 0}^{\tau_T -1} B_{\tau'}$.

Finally, assume that $\del_T = \cancNone$ and $B_{\tau_T} < \sum_{\tau' = 0}^{\tau_T -1} B_{\tau'}$ (i.e., the condition on \Cref{state:recommendation-two} is true and we recommend $R_T$ as in \Cref{state:recommendationtwo}).
Then the recommendation $R_T$ is the point with the best upper bound at the end of the second-to-last epoch.
Thus, proceeding as in the first part of the proof (case $\del_T \neq \cancNone$), we get
\begin{align*}
&
    f(R_T) - f(x^\star)
=
    f\brb{ R_{t_{\tau_T-1}} } - f(x^\star)
\\
&
\qquad
\le
    12 \lrb{ \frac{24}{2^{1/\alpha} - 1} } ^\alpha  \frac{c }{ (\sum_{t=1}^{t_{\tau_T-1}} b_t)^\alpha } 
\\
&
\qquad\qquad
    + \frac{9}{8} L \labs{I} \exp \lrb{- \frac{\ln 2}{24} \frac{\sum_{t=1}^{t_{\tau_T-1}} b_t}{\max_{t \in [t_{\tau_T-1}]} b_t}} 
\\
&
\qquad
<
    12 \lrb{ \frac{48}{2^{1/\alpha} - 1} } ^\alpha \frac{c}{(\sum_{t=1}^T b_t)^{\alpha}}
\\
&
\qquad\qquad
    + \frac{9}{8} L \labs{I} \exp \lrb{- \frac{\ln 2}{48} \frac{\sum_{t=1}^T b_t}{\max_{t \in [T]} b_t}}
\end{align*}
where $L$ the \lip{} constant of $f$ on $[l_{\tau_T}, r_{\tau_T}]$. Being the interval $I$, the time $T$ and the convex function $f$ arbitrarily chosen, the proof is complete.
\end{proof}

\end{document}